\documentclass[3p,12pt,authoryear]{elsarticle}

\usepackage{amsmath,amssymb,amsfonts,amsthm}
\usepackage{booktabs}
\usepackage{array}
\usepackage{tabularx}
\usepackage{float}
\usepackage{graphicx}
\usepackage{xcolor}
\usepackage{hyperref}
\usepackage{orcidlink}
\hypersetup{hidelinks}
\usepackage[nottoc]{tocbibind}
\biboptions{round}

\newtheorem{theorem}{Theorem}[section]
\newtheorem{proposition}[theorem]{Proposition}

\newtheorem{corollary}[theorem]{Corollary}
\theoremstyle{definition}

\theoremstyle{remark}

\newcommand{\R}{\mathbb R}
\newcommand{\N}{\mathbb N}
\newcommand{\C}{\mathbb C}
\newcommand{\ez}{e_{\mathstrut 0}}
\newcommand{\ezC}{e_{\mathstrut 0}^{C}}
\newcommand{\sd}{s_{\mathstrut 0}}
\newcommand{\one}{\mathbf 1}
\newcommand{\fl}{\mathrm{fl}}
\newcommand{\diag}{\operatorname{diag}}
\newcommand{\Var}{\operatorname{Var}}
\newcolumntype{L}[1]{>{\raggedright\arraybackslash}p{#1}}

\begin{document}

\begin{frontmatter}

\title{Algebraic and FFT-Based Methods for Discrete-Time Matrix Convolutions with Applications to Semi-Markov Models}

\author[athens]{Leonidas Kordalis\corref{cor1} \orcidlink{0009-0009-1033-7700}}
\ead{lkordali@math.uoa.gr}

\author[athens,cs]{Samis Trevezas\orcidlink{0000-0003-2262-8299}}
\cortext[cor1]{Corresponding author}
\address[athens]{Department of Mathematics, National and Kapodistrian University of Athens, Athens 15784, Greece}
\address[cs]{MICS Laboratory, CentraleSup\'elec, Universit\'e Paris-Saclay, 91190 Gif-sur-Yvette, France}

\begin{abstract}
We consider finite-horizon convolution equations with matrix-valued coefficients and their use in Markov renewal computations. A sequence is inverted in a truncated noncommutative series algebra, and explicit coefficient formulae are combined with zero-padded fast Fourier transform (FFT) multiplication, Newton iteration and Gauss--Jordan elimination. We prove exactness of finite-horizon inversion, deterministic perturbation identities, and left and right a posteriori residual bounds. The FFT analysis includes transform errors and rounding in the frequency-domain matrix products. For continuous-time semi-Markov equations, an endpoint mean-value rule converts matrix Stieltjes convolutions into discrete matrix convolutions. Error estimates are obtained under bounded-variation and smoothness assumptions, and a weighted resolvent argument yields fixed-horizon convergence, with second-order convergence for smooth kernels. The same inversion framework computes transition probabilities, first-entrance distributions, reliability, availability, renewal visits and reward-type quantities. Numerical experiments examine scaling in the horizon and state dimension, residual accuracy, first-entrance probabilities, convergence against an exact Markov benchmark, and a heavy-tailed Lognormal model. The accelerated methods preserve the probabilistic calculations while reducing the cost of long-horizon convolutional inversion.
\end{abstract}

\begin{keyword}
Matrix convolution \sep Fast Fourier transform \sep Markov renewal equation \sep Semi-Markov process \sep Stieltjes convolution \sep First-entrance probability
\MSC[2020] 60K15 \sep 60K20 \sep 65R20 \sep 65T50
\end{keyword}

\end{frontmatter}

\section{Introduction}

Semi-Markov processes extend Markov processes by allowing the conditional sojourn-time law to depend on the present state and on the state entered at the next transition. Their transition probabilities and related functionals are governed by Markov renewal equations. Classical finite-state accounts are given by \citet{Pyke1961} and \citet{Cinlar1975}, whereas the reliability formulation is developed by \citet{LimniosOprisan2001}. In discrete time, these equations are matrix convolution equations. In continuous time, they involve matrix Stieltjes convolutions and remain meaningful when the kernel has no density.

Convolution algebra has already been used in semi-Markov dependability. \citet{Limnios1997} represented transition and dependability measures in a convolution algebra without assuming absolute continuity of the kernel. The present paper studies the finite-horizon realization of this representation. Matrix convolutional inverses are computed in a truncated series algebra; their perturbation and residual errors are controlled; and the same inversion framework is used for several Markov renewal functionals.

Numerical treatment of renewal and Stieltjes equations has followed several directions. \citet{McConalogue1981} treated convolution integrals whose densities have singularities at the origin, and \citet{Baxter1981} discussed practical numerical convolution. \citet{Xie1989} developed procedures for renewal-type integral equations, while \citet{XiePreussCui2003} analysed the errors of integration procedures for renewal equations and convolution integrals. \citet{BoehmePreussWall1991} proposed a two-point rule for Stieltjes integrals and derived error bounds in reliability problems. \citet{Tortorella2005} studied trapezoid-like and Simpson-like algorithms for Stieltjes renewal equations and related the continuous equation to discrete renewal calculations. These methods discretize the integral equation directly. Transform methods provide a different approach: \citet{Stehfest1970} introduced a numerical inverse Laplace-transform algorithm, and \citet{FinkelsteinZarudnij2002} used Laplace-transform and fast-repair approximations for availability measures.

Specific numerical methods for Markov renewal and semi-Markov equations are also available. \citet{ElkinsWortman2001} derived tight upper and lower bounds for the Markov renewal kernel, proposed a parallel algorithm, and discussed computational error and stability. \citet{HouLimniosSchon2017} proved existence and uniqueness for countable-state Markov renewal equations through an iterative scheme and included a numerical reliability application. \citet{Mercier2008} bounded continuous-time semi-Markov quantities by two discrete-time semi-Markov models and established convergence of the bounds. \citet{JanssenManca2001} applied quadrature to the nonhomogeneous transient equation and proved convergence; \citet{CorradiJanssenManca2004} developed the corresponding homogeneous construction. \citet{MouraDroguett2010} reduced the homogeneous transient problem from $s^2$ coupled integral equations to $s$ coupled equations followed by $s$ direct integrations. \citet{WuMayaLimnios2021} approximated a continuous-time semi-Markov
process by a discrete-time semi-Markov chain and bounded the
transition-matrix error; see also the published correction
\citep{WuMayaLimnios2021Correction}. \citet{WuLimnios2024} compared algebraic, truncated and iterative procedures with Laplace-based and chain-based calculations for reliability and availability.

Iterative solutions of general matrix convolution equations were considered by \citet{KilicmanAlZhour2010} through a box-convolution product. Fast Newton inversion and arithmetic with formal power series are classical in computational algebra \citep{BrentKung1978,VonZurGathenGerhard2003}. Here these tools are applied to noncommutative matrix coefficients, combined with zero-padded FFT multiplication, and related to finite-horizon Markov renewal computation and Stieltjes discretization.

Table~\ref{tab:comparison} summarizes the closest approaches and the additional elements considered here.

\begin{table}[H]
\centering
\small
\begin{tabularx}{\textwidth}{@{}L{0.19\textwidth}L{0.34\textwidth}X@{}}
\toprule
Work & Method and object & Additional element considered here\\
\midrule
\citet{Limnios1997} & Convolution-algebra representations of semi-Markov transition and dependability measures & Finite-horizon series inversion, FFT algorithms, roundoff analysis and residual estimates\\
\citet{ElkinsWortman2001} and \citet{Mercier2008} & Bounds for Markov renewal kernels and semi-Markov quantities, with parallel computation or discrete bounding models & Direct computation of the truncated convolutional inverse and a posteriori verification of that inverse\\
\citet{JanssenManca2001}, \citet{CorradiJanssenManca2004}, and \citet{MouraDroguett2010} & Quadrature schemes for transient semi-Markov equations, convergence results, and reduction of the number of coupled integral equations & Reduction of the Stieltjes product to a matrix-sequence convolution, followed by the inverse algorithms used in discrete time\\
\citet{WuMayaLimnios2021} and \citet{WuLimnios2024} & Discrete-chain approximation with error bounds, and comparison of algebraic, truncated, iterative, Laplace-based and chain-based procedures & Newton and Gauss--Jordan inversion in a truncated series algebra, finite-horizon exactness, perturbation identities and FFT forward-error control\\
\citet{KilicmanAlZhour2010} & Fixed-point iterations for general matrix convolution equations based on box convolution & Direct truncated-series inversion and its use for Markov renewal, first-entrance and reward equations\\
\bottomrule
\end{tabularx}
\caption{Comparison with selected numerical approaches to renewal and matrix convolution equations.}
\label{tab:comparison}
\end{table}

The analysis proceeds as follows. Explicit and recursive formulae are obtained for the inverse of a matrix-valued sequence, and finite-horizon inversion is identified with inversion in a quotient ring. Zero-padded FFT multiplication is combined with Newton iteration and Gauss--Jordan elimination, with operation counts in the horizon and the matrix dimension. Deterministic perturbation identities, left and right residual estimates, and a forward-error bound accounting for the FFT and the frequency-domain matrix products provide numerical controls. For continuous-time equations, an endpoint mean-value approximation converts a matrix Stieltjes convolution into a discrete matrix convolution; its deterministic error and the resulting fixed-horizon Markov renewal error are derived. The same inversion framework yields transition probabilities, first-entrance distributions, reliability, availability, renewal visits and reward-type quantities.

Section~\ref{sec:convolution} develops the sequence algebra and the accelerated inverse algorithms. Section~\ref{sec:stieltjes} treats the Stieltjes approximation. Section~\ref{sec:semimarkov} gives the Markov renewal representations and the fixed-horizon error result. Section~\ref{sec:numerics} contains the numerical study. Proofs and implementation details are given in the appendices.

\section{Matrix-valued convolution and inversion}\label{sec:convolution}

Let $\N=\{0,1,2,\ldots\}$, let $E=\{1,\ldots,s\}$, and set

\begin{equation*}
\mathcal M_{r,q}(\C):=\C^{r\times q},
\qquad
\mathcal M_s(\C):=\mathcal M_{s,s}(\C),
\qquad
\mathfrak S_{r,q}:=\{A:\N\longrightarrow\mathcal M_{r,q}(\C)\}.
\end{equation*}

We write $\mathfrak S_s:=\mathfrak S_{s,s}$. Let $I_s$ and $O_s$ denote the identity and zero matrices in $\mathcal M_s(\C)$, respectively. For $A\in\mathfrak S_{r,m}$ and $B\in\mathfrak S_{m,q}$, their convolution is

\begin{equation}\label{eq:convolution}
(A*B)(k):=\sum_{\ell=0}^{k}A(\ell)B(k-\ell),
\qquad k\in\N.
\end{equation}

The definition includes rectangular coefficient matrices, which will be used for first-entrance kernels. For square sequences, the identity element is

\begin{equation*}
\ez(0)=I_s,
\qquad
\ez(k)=O_s,
\quad k\geq1.
\end{equation*}

The constant summation sequence is denoted by $\sd$, where $\sd(k)=I_s$ for every $k\in\N$; hence

\begin{equation*}
(\sd*A)(k)=\sum_{\ell=0}^{k}A(\ell).
\end{equation*}

For a computational horizon of length $N$, let $I_N=\{0,\ldots,N-1\}$ and define

\begin{equation*}
\|A\|_{1,N}:=\sum_{k=0}^{N-1}\|A(k)\|_2,
\qquad
\|A\|_{G,N}:=
\left(\sum_{k=0}^{N-1}\|A(k)\|_F^2\right)^{1/2},
\end{equation*}

where $\|\cdot\|_2$ and $\|\cdot\|_F$ are the spectral and Frobenius matrix norms, respectively. In particular, $\|\ez\|_{1,N}=1$. The following Young-type estimates will be used repeatedly; they are finite discrete forms of Young's convolution inequality \citep{Folland1999}:

\begin{equation}\label{eq:young}
\|A*B\|_{G,N}\leq\|A\|_{1,N}\|B\|_{G,N},
\qquad
\|A*B\|_{1,N}\leq\|A\|_{1,N}\|B\|_{1,N},
\end{equation}

where the product is truncated after coefficient $N-1$. In the first estimate, the coefficient dimensions are assumed compatible with left multiplication by $A(k)$.

The correspondence

\begin{equation*}
A\longleftrightarrow P_A(x):=\sum_{k\geq0}A(k)x^k
\end{equation*}

identifies convolution with multiplication in the noncommutative ring $\mathcal M_s(\C)[[x]]$. The $n$-fold convolution power is denoted by $A^{(n)}$, with $A^{(0)}=\ez$.

\subsection{Finite Fourier computation}

Let $A$ be supported on $I_{N_A}$ and $B$ on $I_{N_B}$. Choose a power of two $L\geq N_A+N_B-1$ and extend both sequences by zero to $I_L$. Their DFTs are

\begin{equation*}
\widehat A(m)=\sum_{k=0}^{L-1}A(k)\exp\left(-2\pi\mathrm i\frac{km}{L}\right),
\qquad m\in I_L,
\end{equation*}

and similarly for $B$, where $\mathrm i^2=-1$. The linear convolution is obtained by the inverse DFT from

\begin{equation}\label{eq:fft-convolution}
A*B=\operatorname{IDFT}_L\bigl(\widehat A(m)\widehat B(m)\bigr)_{m\in I_L}.
\end{equation}

Parseval's identity gives

\begin{equation}\label{eq:parseval}
\|A\|_{G,L}=L^{-1/2}\|\widehat A\|_{G,L}.
\end{equation}

The FFT cost is

\begin{equation}\label{eq:fft-complexity}
\mathcal O(s^2L\log L+s^3L),
\end{equation}

whereas direct computation of the first $N$ coefficients requires $\mathcal O(s^3N^2)$ operations.

We next state a forward-error bound under the standard componentwise FFT model. Let $u$ be the unit roundoff. Assume that the computed forward and inverse transforms satisfy a relative Euclidean bound $\sigma_L$, with $\sigma_L=\mathcal O(u\log L)$ for a radix-two Cooley--Tukey implementation \citep{Higham2002}. Assume also that the computed frequency-domain products satisfy

\begin{equation*}
\|\fl(XY)-XY\|_F
\leq
\gamma_s\|X\|_F\|Y\|_F,
\end{equation*}

where $\fl(\cdot)$ denotes floating-point evaluation and $\gamma_s=\mathcal O(su)$ is an admissible bound for the complex matrix multiplication used by the implementation. Define

\begin{equation*}
\beta_{L,s}:=2\sigma_L+\sigma_L^2+\gamma_s(1+\sigma_L)^2,
\qquad
\delta_{L,s}:=\beta_{L,s}+\sigma_L(1+\beta_{L,s}).
\end{equation*}

The FFT convolution error is given next.

\begin{proposition}\label{prop:fft-error}
Let $C=A*B$ be computed by zero-padding to length $L$, componentwise FFTs, frequency-domain matrix multiplication, and an inverse FFT. If $C_{\fl}$ is the floating-point result, then

\begin{equation}\label{eq:fft-forward-error}
\|C-C_{\fl}\|_{G,L}
\leq
\sqrt L\,\|A\|_{G,L}\|B\|_{G,L}\,\delta_{L,s}.
\end{equation}

If $C\neq0$, the corresponding relative bound is

\begin{equation}\label{eq:fft-relative-error}
\frac{\|C-C_{\fl}\|_{G,L}}{\|C\|_{G,L}}
\leq
\kappa_{\mathrm{conv}}(A,B)\,\delta_{L,s},
\qquad
\kappa_{\mathrm{conv}}(A,B)
:=
\frac{\sqrt L\,\|A\|_{G,L}\|B\|_{G,L}}{\|A*B\|_{G,L}}.
\end{equation}

\end{proposition}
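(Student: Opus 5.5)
The plan is to follow the error through the three stages of the computation. We write $\widehat A_{\fl}=\widehat A+E_A$ and $\widehat B_{\fl}=\widehat B+E_B$ for the computed forward transforms. By the transform model, $\|E_A\|_{G,L}\le\sigma_L\|\widehat A\|_{G,L}$, and similarly for $B$, where the $G$-norm over frequencies is the Euclidean norm of all entries. Next let $P(m)=\fl(\widehat A_{\fl}(m)\widehat B_{\fl}(m))$ be the computed frequency products and $\widehat C(m)=\widehat A(m)\widehat B(m)$ the exact ones. Finally $C_{\fl}=\fl(\operatorname{IDFT}_L P)$, and by \eqref{eq:fft-convolution} we have $C=\operatorname{IDFT}_L\widehat C$.

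\emph{Step 1 (frequency-domain error).} For each $m$ we split
\begin{equation*}
P(m)-\widehat C(m)=\bigl[\fl(\widehat A_{\fl}\widehat B_{\fl})-\widehat A_{\fl}\widehat B_{\fl}\bigr](m)+E_A(m)\widehat B(m)+\widehat A(m)E_B(m)+E_A(m)E_B(m).
\end{equation*}
We then bound each term in Frobenius norm using $\|XY\|_F\le\|X\|_F\|Y\|_F$ and the product model. Summing squares over $m$ and applying Cauchy--Schwarz, in the form $\sum_m\|X(m)\|_F^2\|Y(m)\|_F^2\le(\sum_m\|X(m)\|_F^2)(\sum_m\|Y(m)\|_F^2)$, i.e.\ $\ell^4\le\ell^2$ on nonnegative sequences, gives
\begin{equation*}
\|P-\widehat C\|_{G,L}\le\bigl(2\sigma_L+\sigma_L^2+\gamma_s(1+\sigma_L)^2\bigr)\|\widehat A\|_{G,L}\|\widehat B\|_{G,L}=\beta_{L,s}\|\widehat A\|_{G,L}\|\widehat B\|_{G,L}.
\end{equation*}
The $\gamma_s$ term uses $\|\widehat A_{\fl}\|_{G,L}\le(1+\sigma_L)\|\widehat A\|_{G,L}$. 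The same Cauchy--Schwarz step gives $\|\widehat C\|_{G,L}\le\|\widehat A\|_{G,L}\|\widehat B\|_{G,L}$, and hence $\|P\|_{G,L}\le(1+\beta_{L,s})\|\widehat A\|_{G,L}\|\widehat B\|_{G,L}$.

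\emph{Step 2 (inverse transform).} We write $C_{\fl}-C=\bigl[\fl(\operatorname{IDFT}_LP)-\operatorname{IDFT}_LP\bigr]+\operatorname{IDFT}_L(P-\widehat C)$. By \eqref{eq:parseval}, the exact inverse DFT maps the $G$-norm with factor $L^{-1/2}$. The relative model for the computed inverse transform bounds the first bracket by $\sigma_L\|\operatorname{IDFT}_LP\|_{G,L}=\sigma_L L^{-1/2}\|P\|_{G,L}$. Combining this with Step 1 gives
\begin{equation*}
\|C-C_{\fl}\|_{G,L}\le L^{-1/2}\bigl(\beta_{L,s}+\sigma_L(1+\beta_{L,s})\bigr)\|\widehat A\|_{G,L}\|\widehat B\|_{G,L}.
\end{equation*}
Using \eqref{eq:parseval} again, $\|\widehat A\|_{G,L}=\sqrt L\|A\|_{G,L}$ and likewise for $B$. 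The prefactor therefore becomes $L^{-1/2}\cdot L=\sqrt L$, which yields \eqref{eq:fft-forward-error}. The relative bound \eqref{eq:fft-relative-error} then follows by dividing by $\|C\|_{G,L}=\|A*B\|_{G,L}$; this norm is taken over all $L$ coefficients, which is exactly the full linear convolution because $L\ge N_A+N_B-1$.

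The main obstacle is interpreting the transform model precisely. We must fix that $\sigma_L$ bounds the error relative to the exact transform of the input actually fed in, namely $\|\fl(\operatorname{IDFT}P)-\operatorname{IDFT}P\|\le\sigma_L\|\operatorname{IDFT}P\|$, and that the componentwise FFT over the $s\times s$ (or $s\times q$) entries aggregates to a Frobenius/$G$-norm statement. Once this convention is fixed consistently with the scaling in \eqref{eq:parseval}, the computation is routine. The Cauchy--Schwarz step across frequencies is where the dimension-free product bound enters, and the factor $\sqrt L$ is produced there.
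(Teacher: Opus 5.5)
Your proposal is correct and follows the paper's proof essentially step for step. It uses the same decomposition of the frequency-domain error into $E_A\widehat{B}+\widehat{A}E_B+E_AE_B$ plus the product-rounding term, and the same frequencywise inequality $\|XY\|_{G,L}\le\|X\|_{G,L}\|Y\|_{G,L}$ giving $\beta_{L,s}$ and $\|P\|_{G,L}\le(1+\beta_{L,s})\|\widehat{A}\|_{G,L}\|\widehat{B}\|_{G,L}$. It then applies the same triangle inequality at the inverse transform, followed by two applications of Parseval.
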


The proof is given in \ref{app:fft-error}. The factor $\kappa_{\mathrm{conv}}$ makes explicit the conditioning of the particular convolution problem; the transform error alone is of order $u\log L$.

\subsection{Convolutional inverses}

A sequence $A\in\mathfrak S_s$ is convolutionally invertible if there exists $A^{(-1)}\in\mathfrak S_s$ such that

\begin{equation*}
A*A^{(-1)}=\ez=A^{(-1)}*A.
\end{equation*}

The inverse is characterized by its zero-order coefficient.

\begin{theorem}\label{thm:inverse}
A sequence $A\in\mathfrak S_s$ is convolutionally invertible if and only if $A(0)$ is nonsingular. Let $D$ be the sequence concentrated at zero with $D(0)=A(0)^{-1}$ and set $\widetilde A=A*D$. Then

\begin{equation}\label{eq:inverse-neumann}
A^{(-1)}
=
D*\sum_{n\geq0}(\ez-\widetilde A)^{(n)}.
\end{equation}

For every $k\in\N$, the sum is finite and

\begin{equation}\label{eq:inverse-coefficient}
A^{(-1)}(k)
=
A(0)^{-1}\sum_{n=0}^{k}(\ez-\widetilde A)^{(n)}(k).
\end{equation}

\end{theorem}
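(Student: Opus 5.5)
The proof works in the ring $\mathcal M_s(\C)[[x]]$ via the correspondence $A\leftrightarrow P_A$, reading all statements coefficientwise. This is legitimate because convolution is associative and bilinear, so $(\mathfrak S_s,+,*)$ is a (noncommutative) ring with unit $\ez$.

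\emph{Necessity.} If $A*B=\ez$ for some $B$, then evaluating at $k=0$ gives $A(0)B(0)=I_s$. Since the matrices are square, $A(0)$ is nonsingular.

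\emph{Sufficiency and formula \eqref{eq:inverse-neumann}.} Assume $A(0)$ is invertible and set $D(0)=A(0)^{-1}$. Then $\widetilde A(0)=A(0)A(0)^{-1}=I_s$, so $E:=\ez-\widetilde A$ satisfies $E(0)=O_s$.

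The key observation is that $E^{(n)}(k)=O_s$ whenever $n>k$. I will prove this by induction on $n$, using
\[
E^{(n)}(k)=\sum_{\ell=0}^{k}E(\ell)E^{(n-1)}(k-\ell):
\]
the $\ell=0$ term vanishes, and for $\ell\ge1$ we have $k-\ell<n-1$, so the induction hypothesis kills the remaining terms. Consequently the series $S:=\sum_{n\ge0}E^{(n)}$ is well defined coefficientwise, with $S(k)=\sum_{n=0}^{k}E^{(n)}(k)$, a finite sum.

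Next comes the telescoping identity. Coefficientwise,
\[
(\ez-E)*S=S-\sum_{n\ge1}E^{(n)}=\ez,
\]
and likewise $S*(\ez-E)=\ez$. Interchanging convolution with the infinite sum is justified because at each index $k$ only the finitely many terms with $n\le k+1$ are nonzero. Since $\ez-E=\widetilde A$, this shows $\widetilde A*S=S*\widetilde A=\ez$.

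Now put $B:=D*S$. Then $A*B=\widetilde A*S=\ez$ by associativity. For the left inverse, note that $D$ is invertible with inverse $D'$ concentrated at zero with $D'(0)=A(0)$. Hence $A=\widetilde A*D'$, and
\[
B*A=D*S*\widetilde A*D'=D*D'=\ez .
\]
So $B$ is a two-sided inverse of $A$, and by uniqueness of two-sided inverses in a ring, $A^{(-1)}=B$, which is \eqref{eq:inverse-neumann}.

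\emph{Coefficient formula \eqref{eq:inverse-coefficient}.} Since $D$ is concentrated at zero, $(D*S)(k)=A(0)^{-1}S(k)$. Combining this with the finite expression for $S(k)$ gives \eqref{eq:inverse-coefficient}.

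The only delicate step is the vanishing lemma $E^{(n)}(k)=0$ for $n>k$, together with the resulting justification for manipulating the infinite sum termwise. The second point needing care is noncommutativity: the right-normalized $\widetilde A=A*D$ requires the separate check $B*A=\ez$ above, rather than assuming it follows from $A*B=\ez$.
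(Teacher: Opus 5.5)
Your proof is correct and follows essentially the same route as the paper. Both arguments normalize to $\widetilde A=A*D$ with $\widetilde A(0)=I_s$, use $(\ez-\widetilde A)^{(n)}(k)=O_s$ for $n>k$ to make the geometric series finite coefficientwise, telescope on both sides, and transfer back to $A$ via the inverse of $D$ concentrated at zero; you only add details the paper leaves implicit, namely the induction for the vanishing lemma and the justification of the termwise manipulation.
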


\begin{proof}
Necessity follows by evaluating $A*A^{(-1)}=\ez$ at zero. Conversely, $\widetilde A(0)=I_s$, so $C:=\ez-\widetilde A$ satisfies $C(0)=O_s$. Hence $C^{(n)}(k)=O_s$ for $n>k$, and the series

\begin{equation*}
S:=\sum_{n\geq0}C^{(n)}
\end{equation*}

is finite coefficientwise. For each fixed coefficient, the products below involve only finitely many terms, and the geometric-series calculation gives

\begin{equation*}
(\ez-C)*S
=
\sum_{n\geq0}C^{(n)}-\sum_{n\geq0}C^{(n+1)}
=
\ez.
\end{equation*}

The same calculation on the right gives $S*(\ez-C)=\ez$. Thus $S$ is the two-sided inverse of $\widetilde A$. Let $D^{(-1)}$ be concentrated at zero with $D^{(-1)}(0)=A(0)$. Since $A=\widetilde A*D^{(-1)}$, associativity gives

\begin{equation*}
A*(D*S)=\ez,
\qquad
(D*S)*A=\ez.
\end{equation*}

Thus $A^{(-1)}=D*S$.
\end{proof}

The coefficient recursion is

\begin{equation}\label{eq:inverse-recursion}
A^{(-1)}(0)=A(0)^{-1},
\qquad
A^{(-1)}(k)
=-\sum_{\ell=0}^{k-1}A^{(-1)}(\ell)A(k-\ell)A(0)^{-1},
\quad k\geq1.
\end{equation}

Indeed, the coefficient of order zero in $A^{(-1)}*A=\ez$ gives $A^{(-1)}(0)=A(0)^{-1}$. For $k\geq1$, the coefficient of order $k$ is

\begin{equation*}
\sum_{\ell=0}^{k}A^{(-1)}(\ell)A(k-\ell)=O_s,
\end{equation*}

and solving for $A^{(-1)}(k)$ gives \eqref{eq:inverse-recursion}.

For finite-horizon computation, let

\begin{equation*}
\mathfrak P_{r,q,N}
:=
\left\{
\sum_{k=0}^{N-1}A_kx^k:A_k\in\mathcal M_{r,q}(\C)
\right\},
\end{equation*}

and let

\begin{equation*}
\mathfrak A_{s,N}
:=
\mathcal M_s(\C)[x]/\langle x^N\rangle.
\end{equation*}

Compatible truncated products map $\mathfrak P_{r,m,N}\times\mathfrak P_{m,q,N}$ into $\mathfrak P_{r,q,N}$, whereas $\mathfrak A_{s,N}$ is a unital associative ring. Define

\begin{equation*}
\Pi_N:\mathfrak S_{r,q}\longrightarrow\mathfrak P_{r,q,N},
\qquad
\Pi_NA:=\sum_{k=0}^{N-1}A(k)x^k.
\end{equation*}

For square coefficients, $\Pi_NA$ is identified with its class in $\mathfrak A_{s,N}$. The inverse on the finite horizon is exact in this quotient algebra.

\begin{proposition}\label{prop:finite-horizon}
If $A(0)$ is nonsingular, then

\begin{equation}\label{eq:finite-horizon}
\Pi_N\bigl(A^{(-1)}\bigr)
=
\bigl(\Pi_NA\bigr)^{-1}
\quad\text{in }\mathfrak A_{s,N}.
\end{equation}
\end{proposition}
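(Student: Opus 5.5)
The plan is to show that $\Pi_N$, restricted to square sequences, is a unital ring homomorphism from $\mathfrak S_s$ (with convolution) onto $\mathfrak A_{s,N}$. Once this is established, \eqref{eq:finite-horizon} follows by applying $\Pi_N$ to the two-sided identities $A*A^{(-1)}=\ez=A^{(-1)}*A$ supplied by Theorem~\ref{thm:inverse}.

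First, $\Pi_N$ is additive and sends $\ez$ to the class of $I_s$, the unit of $\mathfrak A_{s,N}$. The key step is multiplicativity: for $A,B\in\mathfrak S_s$,
\begin{equation*}
\Pi_N(A*B)=\Pi_NA\cdot\Pi_NB\quad\text{in }\mathfrak A_{s,N}.
\end{equation*}
To verify it, expand the product of the two polynomials, $\sum_{k,\ell\le N-1}A(k)B(\ell)x^{k+\ell}$. Since $x$ is central in $\mathcal M_s(\C)[x]$, the coefficient matrices keep their left-to-right order, which matters because the coefficients do not commute. In the quotient by $\langle x^N\rangle$, all terms with $k+\ell\ge N$ vanish. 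For each $j\le N-1$, the surviving coefficient of $x^j$ is $\sum_{k=0}^{j}A(k)B(j-k)=(A*B)(j)$, which is exactly \eqref{eq:convolution}. The coefficients of $A*B$ of order $j<N$ involve only $A(0),\dots,A(N-1)$ and $B(0),\dots,B(N-1)$, so the truncation causes no loss. This locality is the only point where some care is needed.

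Applying the homomorphism gives $\Pi_NA\cdot\Pi_N(A^{(-1)})=\Pi_N\ez=1$ and $\Pi_N(A^{(-1)})\cdot\Pi_NA=1$ in $\mathfrak A_{s,N}$. Hence $\Pi_N(A^{(-1)})$ is a two-sided inverse of $\Pi_NA$. Inverses in an associative unital ring are unique: if $y$ and $y'$ are both inverses, then $y=y(\Pi_NA)y'=y'$. Therefore \eqref{eq:finite-horizon} holds.

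There is no substantial obstacle. The step that needs attention is the multiplicativity identity under noncommutativity, where one must check that the truncated product respects the coefficient order. One could also note the converse: $\Pi_NA$ is invertible in $\mathfrak A_{s,N}$ only when $A(0)$ is nonsingular, by looking at the constant term. This is not needed for the statement.
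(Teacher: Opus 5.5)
Your proposal is correct and follows the paper's proof essentially step for step. The paper also shows that $\Pi_N$ is a ring homomorphism from $\mathfrak S_s$ onto $\mathfrak A_{s,N}$ by matching the coefficient of $x^k$, $k<N$, in $(\Pi_NA)(\Pi_NB)$ with $(A*B)(k)$, applies it to $A*A^{(-1)}=\ez=A^{(-1)}*A$, and concludes by uniqueness of two-sided inverses in a unital ring.
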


Thus, the coefficients $A^{(-1)}(0),\ldots,A^{(-1)}(N-1)$ depend only on $A(0),\ldots,A(N-1)$. No infinite-tail truncation error is introduced on the computed horizon.

Newton inversion is performed in $\mathfrak A_{s,N}$. Let $B_0$ be concentrated at zero, with
$B_0(0)=A(0)^{-1}$. The update is

\begin{equation}\label{eq:newton}
B_{r+1}
=
B_r+B_r*(\ez-A*B_r)
=
B_r*(2\ez-A*B_r)
\pmod{x^{2^{r+1}}}.
\end{equation}

Each step doubles the number of correct coefficients. The update is classical for formal power series \citep{BrentKung1978}; its matrix-valued use here preserves the order of the noncommutative products. An alternative is Gauss--Jordan elimination applied to $P_A(x)$ over $\C[[x]]/\langle x^N\rangle$, with row pivoting and pivots whose constant coefficient is nonzero. Details are given in \ref{app:algorithms}.

The operation counts of the three inverse procedures are summarized next.

\begin{proposition}\label{prop:inverse-complexity}
Let $N$ be the number of required coefficients. Under FFT multiplication of scalar series, the recursive inverse \eqref{eq:inverse-recursion} requires

\begin{equation*}
\mathcal O(s^3N^2)
\end{equation*}

arithmetic operations. Newton inversion requires

\begin{equation*}
\mathcal O(s^2N\log N+s^3N),
\end{equation*}

whereas Gauss--Jordan elimination over $\C[[x]]/\langle x^N\rangle$ requires

\begin{equation*}
\mathcal O(s^3N\log N).
\end{equation*}

All three methods require $\mathcal O(s^2N)$ coefficient storage. For fixed $s$, the two FFT-based methods are quasi-linear in $N$.
\end{proposition}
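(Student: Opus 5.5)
The plan is to count the three procedures one at a time, using the FFT cost \eqref{eq:fft-complexity} for one zero-padded product as the basic unit.

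\emph{Recursive inverse.} Step $k$ of \eqref{eq:inverse-recursion} forms $k$ products $A^{(-1)}(\ell)A(k-\ell)$. Each costs $\mathcal O(s^3)$, and there is one further right multiplication by the precomputed $A(0)^{-1}$. Summing over $k<N$ gives $\mathcal O(s^3N^2)$, plus $\mathcal O(s^3)$ for inverting $A(0)$. The recursion is inherently sequential, since coefficient $k$ depends on all earlier ones. So batching the scalar products by FFT does not change the quadratic count, and the stated bound is what I would prove. Storage is the $N$ coefficients of $A$ and of $A^{(-1)}$, namely $\mathcal O(s^2N)$.

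\emph{Newton.} I would first verify the doubling claim. If $A*B_r\equiv\ez \pmod{x^{m}}$ with $m=2^r$, set $E_r:=\ez-A*B_r$, so that $E_r\equiv 0\pmod{x^m}$. The update \eqref{eq:newton} gives
\[
\ez-A*B_{r+1}=\ez-A*B_r-A*B_r*E_r=E_r-(\ez-E_r)*E_r=E_r*E_r\equiv0\pmod{x^{2m}}.
\]
Only associativity is used here, and the factor order is respected throughout. By Proposition~\ref{prop:finite-horizon}, the truncated $B_R$ with $2^R\geq N$ is the exact finite-horizon inverse; the resulting right inverse is two-sided because $\mathfrak A_{s,N}$ is a ring in which $A$ is a unit.

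Step $r$ needs two truncated products at length $m_r=2^{r+1}$: first $A*B_r$, then $B_r*(\cdot)$. Each uses zero-padding to length $\mathcal O(m_r)$. By \eqref{eq:fft-complexity} this costs $\mathcal O(s^2m_r\log m_r+s^3m_r)$: there are $s^2$ scalar FFTs of each input, $m_r$ pointwise $s\times s$ matrix products, and $s^2$ inverse FFTs. Summing the geometric series $\sum_{r\le \lceil\log_2 N\rceil}m_r=\mathcal O(N)$ gives $\mathcal O(s^2N\log N+s^3N)$. Storage is a constant number of length-$\mathcal O(N)$ arrays of $s\times s$ matrices, i.e.\ $\mathcal O(s^2N)$.

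\emph{Gauss--Jordan.} Treat $P_A(x)$ as an $s\times s$ matrix over the commutative ring $R_N=\C[[x]]/\langle x^N\rangle$, augmented by the identity. An element of $R_N$ is a unit iff its constant term is nonzero. Since $A(0)$ is nonsingular, in each column some remaining row has a pivot with nonzero constant term; this is ordinary row pivoting on the constant-term matrix, which stays nonsingular under the elimination. Elimination uses $\mathcal O(s^3)$ ring operations: $s$ pivot inversions and $\mathcal O(s^3)$ multiply-subtracts. One inversion of a scalar series in $R_N$ costs $\mathcal O(N\log N)$, by the $s=1$ case of the Newton count. One multiplication costs $\mathcal O(N\log N)$ by FFT. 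The total is therefore $\mathcal O(s^3N\log N)$, and storage is the $2s^2$ series entries of the augmented matrix, giving $\mathcal O(s^2N)$.

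The main obstacle is the Newton step. Two points need care: the doubling identity must be established without commutativity, as in the display above; and the cost must charge only $s^2$ transforms (not $s^3$) per product, so that the frequency-domain $s^3$ work stays linear in the length. The final remark follows directly: for fixed $s$, both $\mathcal O(s^2N\log N+s^3N)$ and $\mathcal O(s^3N\log N)$ are $\mathcal O(N\log N)$, i.e.\ quasi-linear in $N$.
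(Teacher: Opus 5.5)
Your proposal is correct and follows the same route as the paper. For the recursion, you count $N(N-1)/2$ matrix products; for Newton, you charge $\mathcal O(s^2m\log m+s^3m)$ per FFT product and sum over the doubling lengths; for Gauss--Jordan, you count $\mathcal O(s^3)$ scalar series products plus $s$ scalar Newton inversions, each costing $\mathcal O(N\log N)$. You also derive $\ez-A*B_{r+1}=(\ez-A*B_r)^{(2)}$ and justify pivot existence by reduction modulo $x$, two points the paper only states in its algorithmic description.
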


The proof is given in \ref{app:algorithms}. The two accelerated bounds reflect different dependencies on the state dimension: Newton iteration uses matrix products at each frequency, whereas Gauss--Jordan elimination uses $\mathcal O(s^3)$ scalar series products.

\subsection{Perturbation and residual controls}

The following identities are stated in the truncated algebra $\mathfrak A_{s,N}$.

\begin{proposition}\label{prop:perturbation}
Let $q,\widetilde q\in\mathfrak A_{s,N}$ satisfy $q(0)=\widetilde q(0)=O_s$, and define

\begin{equation*}
\psi=(\ez-q)^{(-1)},
\qquad
\widetilde\psi=(\ez-\widetilde q)^{(-1)}.
\end{equation*}

Then

\begin{equation}\label{eq:resolvent-identity}
\widetilde\psi-\psi
=
\widetilde\psi*(\widetilde q-q)*\psi,
\end{equation}

and

\begin{equation}\label{eq:perturbation-bound}
\|\widetilde\psi-\psi\|_{1,N}
\leq
\|\widetilde\psi\|_{1,N}
\|\widetilde q-q\|_{1,N}
\|\psi\|_{1,N}.
\end{equation}

Let $d\geq1$, let $L,\widetilde L\in\mathfrak P_{s,d,N}$, and set $M=\psi*L$ and $\widetilde M=\widetilde\psi*\widetilde L$. Then

\begin{equation}\label{eq:functional-perturbation}
\|\widetilde M-M\|_{1,N}
\leq
\|\widetilde\psi-\psi\|_{1,N}\|L\|_{1,N}
+
\|\widetilde\psi\|_{1,N}\|\widetilde L-L\|_{1,N}.
\end{equation}

\end{proposition}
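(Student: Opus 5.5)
The plan is to work entirely in the unital associative ring $\mathfrak A_{s,N}$ and use only algebraic manipulations plus the Young-type estimate \eqref{eq:young}. First, existence of $\psi$ and $\widetilde\psi$: since $q(0)=\widetilde q(0)=O_s$, the constant coefficients of $\ez-q$ and $\ez-\widetilde q$ equal $I_s$, which is nonsingular. Theorem~\ref{thm:inverse} together with Proposition~\ref{prop:finite-horizon} then gives two-sided inverses in $\mathfrak A_{s,N}$. (Alternatively, the Neumann series $\sum_{n<N}q^{(n)}$ terminates modulo $x^N$.)

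For the resolvent identity \eqref{eq:resolvent-identity}, I would insert the identities $\widetilde\psi*(\ez-\widetilde q)=\ez$ and $(\ez-q)*\psi=\ez$. Associativity then gives
\begin{equation*}
\widetilde\psi-\psi
=\widetilde\psi*(\ez-q)*\psi-\widetilde\psi*(\ez-\widetilde q)*\psi
=\widetilde\psi*\bigl[(\ez-q)-(\ez-\widetilde q)\bigr]*\psi
=\widetilde\psi*(\widetilde q-q)*\psi .
\end{equation*}
The order of the factors must be kept as written, since the ring is noncommutative. This is the only point requiring care: one must use the left inverse property for $\widetilde\psi$ and the right inverse property for $\psi$, and Theorem~\ref{thm:inverse} guarantees both.

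The bound \eqref{eq:perturbation-bound} follows by applying the second Young estimate in \eqref{eq:young} twice to the triple product. The truncated product in $\mathfrak A_{s,N}$ coincides with the convolution truncated after coefficient $N-1$, and the truncated $\|\cdot\|_{1,N}$ is submultiplicative, since $\|A(\ell)B(k-\ell)\|_2\le\|A(\ell)\|_2\|B(k-\ell)\|_2$ and the summation over $k<N$ is dominated by the product of partial sums.

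For \eqref{eq:functional-perturbation}, I would split
\begin{equation*}
\widetilde M-M=(\widetilde\psi-\psi)*L+\widetilde\psi*(\widetilde L-L),
\end{equation*}
which holds by distributivity of the truncated product over $\mathfrak P_{s,s,N}\times\mathfrak P_{s,d,N}\to\mathfrak P_{s,d,N}$. Then I would apply the triangle inequality and the $\ell^1$ Young estimate. This estimate remains valid for rectangular coefficients because the spectral norm is submultiplicative for compatible rectangular matrices, so the same one-line argument as in the square case applies.

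I expect no real obstacle. The only subtle points are preserving the noncommutative order in the resolvent identity and confirming that the Young estimate applies to rectangular $s\times d$ right factors.
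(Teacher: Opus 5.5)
Your proposal is correct and follows the paper's proof essentially step for step. It uses the same insertion of $\widetilde\psi*(\ez-\widetilde q)=\ez$ and $(\ez-q)*\psi=\ez$ to get the resolvent identity, two applications of the $\ell^1$ Young estimate for \eqref{eq:perturbation-bound}, and the same splitting $\widetilde M-M=(\widetilde\psi-\psi)*L+\widetilde\psi*(\widetilde L-L)$ for \eqref{eq:functional-perturbation}. Your remarks on why the inverses exist in $\mathfrak A_{s,N}$ and why the Young estimate holds for rectangular right factors are also correct; they make explicit what the paper leaves implicit.
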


Let $A=\ez-q\in\mathfrak A_{s,N}$, set
$\psi=A^{(-1)}$, and let
$\widetilde\psi\in\mathfrak A_{s,N}$ be a computed inverse.
 Define the two residuals

\begin{equation*}
\rho_L:=\ez-A*\widetilde\psi,
\qquad
\rho_R:=\ez-\widetilde\psi*A.
\end{equation*}

\begin{proposition}\label{prop:residual}
If either $\|\rho_L\|_{1,N}<1$ or $\|\rho_R\|_{1,N}<1$, then the corresponding residual gives the a posteriori estimate

\begin{equation}\label{eq:residual-bound}
\|\psi-\widetilde\psi\|_{1,N}
\leq
\frac{\|\rho\|_{1,N}}{1-\|\rho\|_{1,N}}
\|\widetilde\psi\|_{1,N},
\end{equation}

where $\rho$ is the residual satisfying the stated condition.
\end{proposition}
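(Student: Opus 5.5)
We prove the two cases separately; each rests on an exact algebraic identity in $\mathfrak A_{s,N}$ followed by the submultiplicative estimate \eqref{eq:young}. Since $A(0)=I_s-q(0)$, we assume, as in Proposition~\ref{prop:perturbation}, that $q(0)=O_s$, so $A(0)=I_s$. Then Theorem~\ref{thm:inverse} and Proposition~\ref{prop:finite-horizon} guarantee that $\psi=A^{(-1)}$ exists as a two-sided inverse in the quotient ring, with $A*\psi=\psi*A=\ez$.

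\emph{Right residual.} Suppose $\|\rho_R\|_{1,N}<1$. Multiplying $\rho_R=\ez-\widetilde\psi*A$ on the right by $\psi$ and using $A*\psi=\ez$ and associativity gives
\begin{equation*}
\rho_R*\psi=\psi-\widetilde\psi,
\qquad\text{so}\qquad
\psi-\widetilde\psi=\rho_R*\psi=\rho_R*(\psi-\widetilde\psi)+\rho_R*\widetilde\psi .
\end{equation*}
Taking $\|\cdot\|_{1,N}$ and applying the second inequality in \eqref{eq:young} twice yields
\begin{equation*}
\|\psi-\widetilde\psi\|_{1,N}\le\|\rho_R\|_{1,N}\|\psi-\widetilde\psi\|_{1,N}+\|\rho_R\|_{1,N}\|\widetilde\psi\|_{1,N}.
\end{equation*}
Since $\|\psi-\widetilde\psi\|_{1,N}$ is a finite sum of coefficient norms and $\|\rho_R\|_{1,N}<1$, we can rearrange to obtain \eqref{eq:residual-bound}.

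\emph{Left residual.} Suppose $\|\rho_L\|_{1,N}<1$. Multiplying $\rho_L=\ez-A*\widetilde\psi$ on the left by $\psi$ gives $\psi*\rho_L=\psi-\widetilde\psi$. Then $\psi-\widetilde\psi=(\psi-\widetilde\psi)*\rho_L+\widetilde\psi*\rho_L$, and the same estimate and rearrangement apply with the factors in the opposite order. In this noncommutative setting it is essential that $\rho_L$ multiplies on the right and $\rho_R$ on the left. Young's inequality in the $\|\cdot\|_{1,N}$ form holds for either order, because $\|XY\|_2\le\|X\|_2\|Y\|_2$.

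The only delicate point is justifying \eqref{eq:young} for products truncated in $\mathfrak A_{s,N}$. Truncation simply discards the coefficients of order $\ge N$. Hence
\begin{equation*}
\sum_{k<N}\Bigl\|\sum_{\ell\le k}X(\ell)Y(k-\ell)\Bigr\|_2\le\sum_{\ell<N}\sum_{j<N}\|X(\ell)\|_2\|Y(j)\|_2,
\end{equation*}
which is the required bound. Beyond this, the argument is a Neumann-type rearrangement. Alternatively, one could write $\psi=(\ez-\rho_R)^{(-1)}*\widetilde\psi$, which is legitimate because $\ez-\rho_R=\widetilde\psi*A$ is invertible here, and bound the Neumann series $\sum_n\rho_R^{(n)}$ by a geometric series. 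The rearrangement above avoids having to discuss the convergence of that series.
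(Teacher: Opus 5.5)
Your proof is correct and matches the paper's except at one step. The paper writes $\widetilde\psi=(\ez-\rho_R)*\psi$, inverts $\ez-\rho_R$ by a Neumann series with $\|(\ez-\rho_R)^{(-1)}\|_{1,N}\leq(1-\|\rho_R\|_{1,N})^{-1}$, and bounds $\psi-\widetilde\psi=(\ez-\rho_R)^{(-1)}*\rho_R*\widetilde\psi$ (and $\widetilde\psi*\rho_L*(\ez-\rho_L)^{(-1)}$ for the left residual). You instead use the equivalent identity $\psi-\widetilde\psi=\rho_R*\psi$ and absorb the $\|\psi-\widetilde\psi\|_{1,N}$ term. This gives the same constant without the Neumann series, though without the paper's explicit error representation; your extra assumption $q(0)=O_s$ is harmless, since only invertibility of $A$ in $\mathfrak A_{s,N}$ is used.
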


The proofs of Propositions~\ref{prop:finite-horizon}--\ref{prop:residual} are given in \ref{app:algebra-proofs}.

\section{Matrix Stieltjes convolution}\label{sec:stieltjes}

Let the entries of $A=(A_{ij})$ be right-continuous functions of bounded variation satisfying $A(0)=O_s$, and let the entries of $B=(B_{ij})$ be Borel measurable and bounded on compact intervals. The integrals below are understood in the Lebesgue--Stieltjes sense. We use the convention

\begin{equation}\label{eq:stieltjes}
[A\star B]_{ij}(t)
=
\sum_{r\in E}\int_{[0,t]}B_{rj}(t-x)\,dA_{ir}(x).
\end{equation}

The first factor is the Stieltjes integrator. This convention is consistent with the semi-Markov equation $P=\overline H+Q\star P$.

Let $t_m=mh$ and set $A_h(m)=A(t_m)$ and $B_h(m)=B(t_m)$. To compute the approximation at $m=0,\ldots,N-1$, the second factor is sampled at the additional point $t_N$. Define

\begin{equation*}
\Delta A(0)=O_s,
\qquad
\Delta A(m)=A_h(m)-A_h(m-1),
\quad m\geq1,
\end{equation*}

and

\begin{equation*}
B_2(m)=\frac{B_h(m+1)+B_h(m)}{2}.
\end{equation*}

The mean-value approximation is

\begin{equation}\label{eq:stieltjes-discrete}
[A\star B](t_m)
\approx
[\Delta A*B_2](m).
\end{equation}

The deterministic error is controlled by the continuity of the second factor. Define

\begin{equation*}
\omega_B(h;T)
:=
\max_{r,j\in E}
\sup_{\substack{0\leq t,u\leq T\\ |t-u|\leq h}}
|B_{rj}(t)-B_{rj}(u)|
\end{equation*}

and

\begin{equation*}
V_A(T)
:=
\max_{i\in E}\sum_{r\in E}\Var_{[0,T]}(A_{ir}).
\end{equation*}

\begin{proposition}\label{prop:stieltjes-deterministic}
Let $E_d(m)=[A\star B](t_m)-[\Delta A*B_2](m)$ and suppose $t_m\leq T$. Then

\begin{equation}\label{eq:stieltjes-entry-error}
|E_{d,ij}(m)|
\leq
\omega_B(h;T)\sum_{r\in E}\Var_{[0,T]}(A_{ir}),
\end{equation}

and consequently

\begin{equation}\label{eq:stieltjes-frob-error}
\|E_d(m)\|_F
\leq
sV_A(T)\omega_B(h;T).
\end{equation}

If $B$ is Lipschitz on $[0,T]$ with componentwise constant $L_B(T)$, then the right-hand side is bounded by $sV_A(T)L_B(T)h$.
\end{proposition}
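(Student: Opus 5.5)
The plan is a direct interval-by-interval comparison of the Stieltjes integral with its discrete counterpart, entry by entry. Fix $i,j\in E$ and $m$ with $t_m\le T$. Since $A(0)=O_s$ and each $A_{ir}$ is right-continuous of bounded variation, the Lebesgue--Stieltjes measure $dA_{ir}$ charges the point $\{0\}$ only through the jump $A_{ir}(0)-A_{ir}(0-)$. With the convention that $A$ vanishes to the left of $0$, this jump is zero. I would state this convention explicitly, since it is the one point where the measure-theoretic bookkeeping could go wrong. The integral over $[0,t_m]$ therefore splits as a sum over the half-open intervals $J_\ell=(t_{\ell-1},t_\ell]$, $\ell=1,\ldots,m$, and $dA_{ir}(J_\ell)=A_{ir}(t_\ell)-A_{ir}(t_{\ell-1})=\Delta A_{ir}(\ell)$.

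On the discrete side, since $\Delta A(0)=O_s$, the $\ell=0$ term of the convolution drops out, and
\begin{equation*}
[\Delta A*B_2]_{ij}(m)=\sum_{r\in E}\sum_{\ell=1}^{m}\Delta A_{ir}(\ell)\,B_{2,rj}(m-\ell)
=\sum_{r\in E}\sum_{\ell=1}^{m}\int_{J_\ell}B_{2,rj}(m-\ell)\,dA_{ir}(x).
\end{equation*}
Subtracting, $E_{d,ij}(m)$ becomes a sum over $r$ and $\ell$ of integrals over $J_\ell$ of $B_{rj}(t_m-x)-B_{2,rj}(m-\ell)$ against $dA_{ir}$.

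For $x\in J_\ell$ we have $t_m-x\in[t_{m-\ell},t_{m-\ell+1})$, and all points involved lie in $[0,t_m]\subset[0,T]$. Hence $t_m-x$ is within distance $h$ of both $t_{m-\ell}$ and $t_{m-\ell+1}$. Writing the integrand as the average of $B_{rj}(t_m-x)-B_{rj}(t_{m-\ell})$ and $B_{rj}(t_m-x)-B_{rj}(t_{m-\ell+1})$ bounds it in absolute value by $\omega_B(h;T)$. The standard estimate $\bigl|\int_J f\,d\mu\bigr|\le\sup_J|f|\,|\mu|(J)$, together with the fact that the total variation measure of $dA_{ir}$ on $J_\ell$ equals $\Var_{J_\ell}(A_{ir})$, gives a bound of $\omega_B(h;T)\Var_{J_\ell}(A_{ir})$ for each piece. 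Summing over $\ell$, additivity of variation over the disjoint intervals gives at most $\omega_B(h;T)\Var_{[0,T]}(A_{ir})$, and summing over $r$ yields \eqref{eq:stieltjes-entry-error}.

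For \eqref{eq:stieltjes-frob-error}, each entry is bounded by $\omega_B(h;T)V_A(T)$ using the maximum over $i$. The Frobenius norm of an $s\times s$ matrix is at most $s$ times its maximal entry modulus, which gives $sV_A(T)\omega_B(h;T)$. In the Lipschitz case, $|B_{rj}(t)-B_{rj}(u)|\le L_B(T)|t-u|\le L_B(T)h$, so $\omega_B(h;T)\le L_B(T)h$. The only delicate step is the endpoint and atom bookkeeping: the half-open intervals must match right-continuity, and there must be no mass at $0$. The rest is routine.
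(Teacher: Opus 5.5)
Your proposal is correct and follows essentially the same route as the paper. You split $[0,t_m]$ into the half-open cells $(t_{\ell-1},t_\ell]$, recognize $\Delta A_{ir}(\ell)B_{2,rj}(m-\ell)$ as the integral of the endpoint average over each cell, bound the integrand by $\omega_B(h;T)$, and sum the variations before passing entrywise to the Frobenius norm; your explicit remark that $A(0)=O_s$ (with $A$ extended by zero to the left) rules out an atom of $dA_{ir}$ at the origin, a point the paper uses only implicitly when it writes the error as a sum over $\ell=1,\ldots,m$.
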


Under smoothness of both factors, the endpoint-average rule is of second order. Suppose that every $A_{ir}$ is absolutely continuous with density $a_{ir}\in C^1([0,T])$ and that every $B_{rj}$ belongs to $C^2([0,T])$. Define

\begin{equation*}
\begin{aligned}
M_{A,0}&:=\max_{i,r}\|a_{ir}\|_{\infty},
&
M_{A,1}&:=\max_{i,r}\|a'_{ir}\|_{\infty},\\
M_{B,1}&:=\max_{r,j}\|B'_{rj}\|_{\infty},
&
M_{B,2}&:=\max_{r,j}\|B''_{rj}\|_{\infty}.
\end{aligned}
\end{equation*}

\begin{corollary}\label{cor:stieltjes-second-order}
Under the preceding assumptions, for $t_m\leq T$,

\begin{equation}\label{eq:stieltjes-second-order}
\|E_d(m)\|_F
\leq
s^2T h^2
\left(
\frac{M_{A,1}M_{B,1}}{12}
+
\frac{M_{A,0}M_{B,2}}{4}
\right).
\end{equation}

\end{corollary}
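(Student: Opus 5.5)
The plan is to work entrywise and cell by cell, reducing the error to a classical estimate for a weighted trapezoid rule. Since every $A_{ir}$ is absolutely continuous with $A_{ir}(0)=0$, the measure $dA_{ir}$ has no atom at $0$ and equals $a_{ir}(x)\,dx$. Hence
\begin{equation*}
[A\star B]_{ij}(t_m)=\sum_{r\in E}\sum_{\ell=1}^{m}\int_{t_{\ell-1}}^{t_\ell} a_{ir}(x)\,B_{rj}(t_m-x)\,dx .
\end{equation*}
On the discrete side, $\Delta A(0)=O_s$ gives
\begin{equation*}
[\Delta A*B_2]_{ij}(m)=\sum_{r}\sum_{\ell=1}^{m}\Delta A_{ir}(\ell)\,\tfrac12\bigl(B_{rj}(t_{m-\ell+1})+B_{rj}(t_{m-\ell})\bigr).
\end{equation*}
Here $\Delta A_{ir}(\ell)=\int_{t_{\ell-1}}^{t_\ell}a_{ir}$. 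The two $B$-values are exactly $g(t_{\ell-1})$ and $g(t_\ell)$ for $g(x):=B_{rj}(t_m-x)$. So for each $(i,r,j,\ell)$ the local error is
\begin{equation*}
e_\ell=\int_{x_0}^{x_1}a(x)g(x)\,dx-\Bigl(\int_{x_0}^{x_1}a\Bigr)\frac{g(x_0)+g(x_1)}{2},
\end{equation*}
with $x_1-x_0=h$, $|g'|\le M_{B,1}$ and $|g''|\le M_{B,2}$ on the cell.

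Let $p$ be the linear interpolant of $g$ at $x_0,x_1$ and split $g=p+(g-p)$. The standard interpolation bound $|g-p|\le \tfrac{h^2}{8}M_{B,2}$ gives
\begin{equation*}
\Bigl|\int a(g-p)\Bigr|\le M_{A,0}M_{B,2}\frac{h^3}{8}.
\end{equation*}
This term vanishes in the averaged part because $p$ agrees with $g$ at the endpoints. For the linear part, write $p(x)=p(c)+\lambda(x-c)$ with $c$ the midpoint and $|\lambda|\le M_{B,1}$ (mean value theorem). The endpoint average of $p$ equals $p(c)$, so the linear contribution is $\lambda\int_{x_0}^{x_1}(x-c)a(x)\,dx$. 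Since $\int(x-c)\,dx=0$, we may subtract $a(c)$:
\begin{equation*}
\Bigl|\lambda\int (x-c)\bigl(a(x)-a(c)\bigr)dx\Bigr|\le M_{B,1}M_{A,1}\int (x-c)^2dx=M_{A,1}M_{B,1}\frac{h^3}{12}.
\end{equation*}
Thus $|e_\ell|\le h^3\bigl(M_{A,1}M_{B,1}/12+M_{A,0}M_{B,2}/4\bigr)$, the sharper constant $1/8$ being relaxed to $1/4$.

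Summing over $\ell=1,\dots,m$ with $mh=t_m\le T$ gives at most $T h^2(\cdots)$ per pair $(r,j)$. Summing over the $s$ values of $r$ gives $|E_{d,ij}(m)|\le sTh^2(\cdots)$. Finally, $\|E_d(m)\|_F\le s\max_{ij}|E_{d,ij}(m)|$ over $s^2$ entries yields \eqref{eq:stieltjes-second-order}. The main step requiring care is the linear-part cancellation: recognizing that the discrete rule uses exactly the endpoint values of $x\mapsto B(t_m-x)$ on each cell, and subtracting $a(c)$ to gain the extra power of $h$. The remaining points are bookkeeping: the index shift $B_h(m-\ell+1)$, and checking that $t_m\le T$ keeps all arguments in $[0,T]$, which holds since $t_m-x\in[0,t_m]$.
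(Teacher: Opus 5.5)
Your proof is correct and follows the paper's argument. On each cell you split the integrand into a part linear in $(x-c)$, reduced to order $h^3$ by subtracting $a(c)$, plus a second-order remainder bounded by $M_{A,0}M_{B,2}$; you then sum over at most $T/h$ cells, over $r$, and over the $s^2$ entries. The only difference is that you take the linear part from the secant (linear interpolant) rather than the paper's midpoint Taylor expansion with slope $f'(c)$, which gives you the sharper remainder constant $1/8$ where the paper has $1/4$.
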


For a semi-Markov kernel $Q$, one has $V_Q(T)\leq1$. Thus the matrix Stieltjes error is controlled directly by the regularity of the continuation term.

For $m\in I_N$, set $C^{\pi}(m):=[A\star B](t_m)$ and $C_h(m):=[\Delta A*B_2](m)$. Let $C_{h,\fl}$ be the FFT computation of $C_h$ with padding length $L$. Combining Propositions~\ref{prop:fft-error} and \ref{prop:stieltjes-deterministic} gives

\begin{equation}\label{eq:stieltjes-total-error}
\|C^{\pi}-C_{h,\fl}\|_{G,N}
\leq
\sqrt N\,sV_A(T)\omega_B(h;T)
+
\sqrt L\,\|\Delta A\|_{G,L}\|B_2\|_{G,L}\delta_{L,s}.
\end{equation}

\section{Semi-Markov equations and functionals}\label{sec:semimarkov}

Let $(J_n,S_n)_{n\geq0}$ be a homogeneous Markov renewal process on $E$, with $S_0=0$. Its kernel is

\begin{equation*}
Q_{ij}(t)
=
\mathbb P(J_{n+1}=j,S_{n+1}-S_n\leq t\mid J_n=i).
\end{equation*}

We assume $Q_{ij}(0)=0$, $\sum_{j\in E}Q_{ij}(\infty)=1$, and $S_n\to\infty$ almost surely, where $Q_{ij}(\infty):=\lim_{t\to\infty}Q_{ij}(t)$. The semi-Markov process is $Z_t=J_{N(t)}$, where $N(t)=\sup\{n:S_n\leq t\}$. Its transition function is $P(t)=(P_{ij}(t))_{i,j\in E}$, where $P_{ij}(t)=\mathbb P(Z_t=j\mid Z_0=i)$. Let

\begin{equation*}
H_i(t)=\sum_{j\in E}Q_{ij}(t),
\qquad
\overline H(t)=\diag(1-H_i(t):i\in E).
\end{equation*}

\subsection{Discrete time}

The discrete semi-Markov kernel is

\begin{equation*}
q_{ij}(k)
=
\mathbb P(J_{n+1}=j,S_{n+1}-S_n=k\mid J_n=i),
\qquad k\geq1,
\end{equation*}

with $q(0)=O_s$ and $\sum_{j\in E}\sum_{k\geq1}q_{ij}(k)=1$. Let $\alpha$ be an initial row distribution on $E$, and define

\begin{equation*}
\overline H(k)
=
\diag\left(1-\sum_{j\in E}\sum_{\ell=1}^{k}q_{ij}(\ell):i\in E\right),
\qquad k\in\N.
\end{equation*}

The Markov renewal sequence is

\begin{equation}\label{eq:renewal-sequence}
\psi=(\ez-q)^{(-1)}=\sum_{n\geq0}q^{(n)},
\qquad
\psi=\ez+q*\psi.
\end{equation}

The transition matrix sequence satisfies

\begin{equation}\label{eq:transition-discrete}
P=\overline H+q*P,
\qquad
P=\psi*\overline H.
\end{equation}

Several further quantities are obtained through the same inversion framework. Let $D\subset E$ be a target set, let $C=E\setminus D$, and let $q_{CC}\in\mathfrak S_{|C|,|C|}$ and $q_{CD}\in\mathfrak S_{|C|,|D|}$ denote the corresponding blocks. We write $\ezC$ for the identity sequence on $C$ and $\one_D$ for the vector of ones in $\R^{|D|}$. Conditioning on the first transition gives the renewal equation

\begin{equation*}
g_D=q_{CD}+q_{CC}*g_D.
\end{equation*}

Thus the first-entrance probability-mass kernel is

\begin{equation}\label{eq:first-entrance-mass}
g_D=(\ezC-q_{CC})^{(-1)}*q_{CD}.
\end{equation}

Its accumulated kernel is

\begin{equation*}
G_D(k)=\sum_{\ell=0}^{k}g_D(\ell).
\end{equation*}

For an initial distribution $\alpha_C$ on $C$,

\begin{equation}\label{eq:first-entrance-cdf}
F_D(k)=\alpha_CG_D(k)\one_D,
\qquad
S_D(k)=1-F_D(k).
\end{equation}

Reliability is obtained by taking $D$ to be the set of failed states.

For $B\subset E$, let $I_B$ be the diagonal indicator matrix and let $\one_s$ be the vector of ones in $\R^s$. Counting the renewal at time zero, the expected number of renewal visits to $B$ up to time $k$ is

\begin{equation}\label{eq:renewal-visits}
\mathcal N_B(k)
=
\alpha\sum_{\ell=0}^{k}\psi(\ell)I_B\one_s.
\end{equation}

Let $\one_{E,B}$ denote the indicator vector of $B$ in $\R^s$. The occupation probability is $A_B(k)=\alpha P(k)\one_{E,B}$. More generally, for $d\geq1$, let $r,V\in\mathfrak S_{s,d}$ and suppose that the matrix-valued reward functional $V$ satisfies $V=r+q*V$. Then

\begin{equation}\label{eq:reward}
V=\psi*r.
\end{equation}

Algorithms for hitting times and accumulated rewards in semi-Markov models are developed by \citet{SilvestrovManca2017}.

The quantities above are collected in Table~\ref{tab:functionals}. Once the corresponding inverse has been computed, the remaining operations are matrix convolutions, cumulative sums or matrix--vector products.

\begin{table}[H]
\centering
\small
\begin{tabularx}{0.93\textwidth}{@{}L{0.27\textwidth}X@{}}
\toprule
Quantity & Convolutional representation\\
\midrule
Markov renewal sequence & $\psi=(\ez-q)^{(-1)}$\\
Transition probabilities & $P=\psi*\overline H$\\
First-entrance mass in $D$ & $g_D=(\ezC-q_{CC})^{(-1)}*q_{CD}$\\
Cumulative first-entrance probability & $F_D(k)=\alpha_CG_D(k)\one_D$, where $G_D(k)=\sum_{\ell=0}^{k}g_D(\ell)$\\
Survival before entrance & $S_D(k)=1-F_D(k)$\\
Renewal visits to $B$ & $\mathcal N_B(k)=\alpha\sum_{\ell=0}^{k}\psi(\ell)I_B\one_s$\\
Occupation probability of $B$ & $A_B(k)=\alpha P(k)\one_{E,B}$\\
Reward-type equation & $V=\psi*r$ for $r,V\in\mathfrak S_{s,d}$ and $V=r+q*V$\\
\bottomrule
\end{tabularx}
\caption{Markov renewal quantities computed from convolutional inverses.}
\label{tab:functionals}
\end{table}

\subsection{Continuous time}

For the convention \eqref{eq:stieltjes}, the transition function satisfies

\begin{equation}\label{eq:transition-continuous}
P=\overline H+Q\star P.
\end{equation}

More generally, consider

\begin{equation}\label{eq:mre-continuous}
K=L+Q\star K.
\end{equation}

For the analysis, define $Q_h(m)=Q(mh)$ for every $m\in\N$. For a fixed horizon $T$ and $N_h=\lfloor T/h\rfloor$, only the values $Q_h(0),\ldots,Q_h(N_h+1)$ are required. Set

\begin{equation*}
L_h(m)=L(mh),
\qquad
\overline H_h(m)=\overline H(mh),
\qquad
0\leq m\leq N_h.
\end{equation*}

The additional value $Q_h(N_h+1)$ is used in $\Delta^+Q_h(N_h)$. Define

\begin{equation*}
\Delta Q_h(0)=O_s,
\qquad
\Delta Q_h(m)=Q_h(m)-Q_h(m-1),
\quad m\geq1,
\end{equation*}

\begin{equation*}
\Delta^+Q_h(m)=\Delta Q_h(m+1),
\qquad
\overline Q_h(m)=\frac{\Delta Q_h(m)+\Delta^+Q_h(m)}{2}.
\end{equation*}

All sequence products in the following discrete equations are truncated after coefficient $N_h$. The mean-value approximation of \eqref{eq:mre-continuous} is

\begin{equation}\label{eq:mre-discrete}
K_h
=
L_h+\overline Q_h*K_h
-
\frac12\Delta^+Q_h*(K_h(0)\ez).
\end{equation}

Since $K_h(0)=L_h(0)$, it follows that

\begin{equation}\label{eq:mre-solution}
K_h
=
(\ez-\overline Q_h)^{(-1)}*
\left[L_h-\frac12\Delta^+Q_h*(L_h(0)\ez)\right].
\end{equation}

The fixed-horizon convergence result is stated next. Define

\begin{equation*}
\omega_K(h;T)
:=
\max_{r,j\in E}
\sup_{\substack{0\leq t,u\leq T\\ |t-u|\leq h}}
|K_{rj}(t)-K_{rj}(u)|.
\end{equation*}

\begin{theorem}\label{thm:mre-global}
Let $T>0$ be fixed, set $N_h=\lfloor T/h\rfloor$, and assume that the solution $K$ of \eqref{eq:mre-continuous} is continuous on $[0,T]$. Then there exist $h_0>0$ and $C_T<\infty$, independent of $h$, such that the solution of \eqref{eq:mre-discrete} satisfies

\begin{equation}\label{eq:mre-global}
\max_{0\leq m\leq N_h}
\|K(mh)-K_h(m)\|_F
\leq
C_T\omega_K(h;T),
\qquad 0<h\leq h_0.
\end{equation}

If $K$ is Lipschitz on $[0,T]$ with componentwise constant $L_K(T)$, then

\begin{equation}\label{eq:mre-order-one}
\max_{0\leq m\leq N_h}
\|K(mh)-K_h(m)\|_F
\leq
C_TL_K(T)h.
\end{equation}

If, in addition, the entries of $Q$ are absolutely continuous with continuously differentiable densities and the entries of $K$ belong to $C^2([0,T])$, then

\begin{equation}\label{eq:mre-order-two}
\max_{0\leq m\leq N_h}
\|K(mh)-K_h(m)\|_F
=
\mathcal O(h^2).
\end{equation}

\end{theorem}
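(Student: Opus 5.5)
We compare the exact solution sampled on the grid with the discrete scheme. The plan is to derive an error equation of the same form as \eqref{eq:mre-discrete}, driven by a local consistency error. We then bound its convolutional resolvent in a weighted norm, uniformly in $h$.

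\textbf{Step 1 (consistency).} Let $K^\pi(m)=K(mh)$ for $0\leq m\leq N_h$. Since $Q(0)=O_s$, the increments are $\Delta Q_h(\ell)=Q(\ell h)-Q((\ell-1)h)$. A summation by parts rewrites the right-hand side of \eqref{eq:mre-discrete}, evaluated at $K^\pi$, in the form
\begin{equation*}
\sum_{\ell=1}^{m}\Delta Q_h(\ell)\,\frac{K^\pi(m-\ell)+K^\pi(m-\ell+1)}{2}.
\end{equation*}
This is exactly the endpoint mean-value rule \eqref{eq:stieltjes-discrete} applied to $Q\star K$ at $t_m$; the correction term $-\tfrac12\Delta^+Q_h*(K(0)\ez)$ removes the boundary contribution. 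Hence
\begin{equation*}
K^\pi=L_h+\overline Q_h*K^\pi-\tfrac12\Delta^+Q_h*(K(0)\ez)+\tau_h,
\end{equation*}
where $\tau_h(m)=E_d(m)$ is the error of Proposition~\ref{prop:stieltjes-deterministic} with $A=Q$ and $B=K$ on $[0,T]$. Since $V_Q(T)\leq1$, this gives $\|\tau_h(m)\|_F\leq s\,\omega_K(h;T)$. Under the smoothness hypotheses, Corollary~\ref{cor:stieltjes-second-order} gives $\|\tau_h(m)\|_F=\mathcal O(h^2)$ instead. I will check the index bookkeeping of this identity carefully, in particular the use of $Q_h(N_h+1)$ at $m=N_h$ and the sampling of $K$ at the extra point beyond $T$. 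If needed, $T$ is replaced by $T+h$, with $K$ extended continuously.

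\textbf{Step 2 (error equation).} Set $e=K^\pi-K_h$. The initial data agree, and subtracting the two equations gives $e=\overline Q_h*e+\tau_h$. Therefore
\begin{equation*}
e=(\ez-\overline Q_h)^{(-1)}*\tau_h,
\end{equation*}
which is well defined because $\overline Q_h(0)=\tfrac12\Delta Q_h(1)$ is small for small $h$. The main obstacle is a bound on the resolvent uniform in $h$. The plain norm $\|\overline Q_h\|_{1,N}$ is only bounded by $V_Q(T)$, which need not be less than $1$. I therefore introduce the exponential weight $w_\lambda(m)=e^{-\lambda mh}$ and the weighted norm
\begin{equation*}
\|A\|_{\lambda}=\sum_{m\le N_h}e^{-\lambda mh}\|A(m)\|_2.
\end{equation*}
Multiplication by the weight commutes with convolution, so the bound \eqref{eq:young} holds for $\|\cdot\|_\lambda$. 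Moreover
\begin{equation*}
\|\overline Q_h\|_\lambda\leq\|\overline Q_h(0)\|_2+\sum_{m\ge1}e^{-\lambda mh}\|\overline Q_h(m)\|_2,
\end{equation*}
and this is bounded, up to a factor of order $s$, by $\int e^{-\lambda(t-h)}\,d\mu(t)$, where $\mu$ is the total-variation measure of $Q$, plus $\max_i\sum_r\Var_{[0,2h]}(Q_{ir})$. Since $Q(0)=O_s$ and $Q$ is right-continuous, $\mu(\{0\})=0$. Hence, by dominated convergence, I can choose $\lambda$ and then $h_0$ so that $\|\overline Q_h\|_\lambda\leq\tfrac12$ for all $h\leq h_0$. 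The Neumann series then gives $\|(\ez-\overline Q_h)^{(-1)}\|_\lambda\leq2$.

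\textbf{Step 3 (conclusion).} A weighted Young estimate gives
\begin{equation*}
e^{-\lambda mh}\|e(m)\|_F\leq2\max_k\|\tau_h(k)\|_F.
\end{equation*}
Multiplying back by $e^{\lambda mh}\leq e^{\lambda T}$ yields \eqref{eq:mre-global} with $C_T=2se^{\lambda T}$. The bound \eqref{eq:mre-order-one} follows from $\omega_K(h;T)\leq L_K(T)h$. The bound \eqref{eq:mre-order-two} follows from the same resolvent estimate applied to the $\mathcal O(h^2)$ consistency error of Corollary~\ref{cor:stieltjes-second-order}.
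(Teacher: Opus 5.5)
Your proposal is correct and follows essentially the same route as the paper. You identify $\tau_h$ with the mean-value Stieltjes error for $A=Q$, $B=K$. You then get an $h$-uniform resolvent bound from an exponentially weighted norm in which $\overline Q_h$ is a contraction, choosing $\lambda$ first by dominated convergence and then $h_0$.

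The differences are minor. The paper uses the entrywise max-row-sum weighted norm, which gives the factor-free bound $\tfrac{1+e^{\lambda h}}{2}r_\lambda$, whereas you absorb a factor $s$ into the smallness; both work. Your bookkeeping worry also resolves in your favour: $\Delta^+Q_h(m)$ only multiplies $K(0)$ and is cancelled exactly by the correction term, and $K$ is sampled only at $t_0,\ldots,t_m$, so no extension beyond $T$ is needed.
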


The proof, based on a weighted convolution norm and a uniform bound for $(\ez-\overline Q_h)^{(-1)}$, is given in \ref{app:mre-proof}.

A second approximation replaces the continuous kernel by the discrete kernel $\Delta Q_h$. It gives

\begin{equation}\label{eq:process-discretization}
P_h=(\ez-\Delta Q_h)^{(-1)}*\overline H_h.
\end{equation}

The two approximations \eqref{eq:mre-solution} and \eqref{eq:process-discretization} are compared below. The first is a direct Stieltjes quadrature in the family of trapezoid-like renewal-equation schemes studied by \citet{BoehmePreussWall1991} and \citet{Tortorella2005}; the second is the associated discrete-chain approximation used by \citet{WuMayaLimnios2021}. Thus the comparison concerns two distinct reductions of the continuous-time equation.

\section{Numerical study}\label{sec:numerics}

All computations were performed on a computer equipped with an AMD Ryzen 9 9955HX 16-Core Processor, under Windows~11 x64 (build 26200), using R~4.5.3 (ucrt), the default matrix-product implementation supplied with this R installation, and LAPACK~3.12.1. Elapsed wall-clock times are reported. Each method was first executed once without timing. For every model configuration, the number of repetitions in a timing batch was then doubled until the total batch time was at least $0.5$ seconds. Five such batches were timed, and the median elapsed time per invocation was retained. This adaptive protocol avoids timer-resolution zeros and applies the same timing rule to all three inverse procedures. The Monte Carlo validation of the Lognormal model was based on $10^6$ simulated trajectories with random seed $20260621$.

The implementation stores a matrix-valued sequence as an array whose third index is time. Every FFT product is zero-padded to the next power of two, and all inverse calculations retain exactly the requested $N$ coefficients. Direct convolution and the coefficient recursion are used as references on horizons for which their quadratic cost remains practical. Accuracy is assessed by discrepancies from the direct calculation and by both residuals

\begin{equation*}
\|(\ez-q)*\widetilde\psi-\ez\|_{1,N},
\qquad
\|\widetilde\psi*(\ez-q)-\ez\|_{1,N}.
\end{equation*}

The R scripts used to generate the numerical results and figures are available from the corresponding author upon reasonable request.

\subsection{Discrete-time scaling and first entrance}

We consider the three-state semi-Markov model with embedded transition matrix $P^{\mathrm{emb}}$ given by

\begin{equation*}
P^{\mathrm{emb}}
=
\begin{pmatrix}
0&1&0\\
1/5&0&4/5\\
1&0&0
\end{pmatrix}.
\end{equation*}

The states represent normal operation, degraded operation and shutdown. The admissible transitions are $1\to2$, $2\to1$, $2\to3$ and $3\to1$.

Two scaling experiments are used. In the first, $s=3$ is fixed and the horizon varies over $N=2^7,\ldots,2^{11}$; the shifted discrete Gamma parameters are given in \ref{app:parameters}. In the second, $N=256$ is fixed and $s\in\{2,4,6,8\}$. For this experiment, each state jumps uniformly to the other states and the conditional sojourn distribution for $i\to j$ is shifted Poisson with parameter $\lambda_{ij}=3+((i+j)\bmod 4)$, where the state indices are numbered from one. The direct recursion, FFT--Newton inversion and FFT--Gauss--Jordan elimination are applied to the same kernels.

Figure~\ref{fig:scaling} reports the measured dependence on the horizon and on the state dimension, while Table~\ref{tab:execution-times} gives the corresponding horizon timings. As $N$ increases from $128$ to $2048$, the coefficient recursion increases from $8.125$ ms to $2270.000$ ms, whereas FFT--Newton increases from $3.047$ ms to $40.000$ ms and FFT--Gauss--Jordan from $1.660$ ms to $18.438$ ms. At $N=2048$, the resulting speed-ups relative to the coefficient recursion are $56.8$ and $123.1$, respectively. For fixed $N=256$, FFT--Gauss--Jordan is fastest for $s=2,4,6$, whereas FFT--Newton is faster at $s=8$, reflecting the stronger dependence of Gauss--Jordan elimination on the matrix dimension. Across both experiments, the left and right residuals of the accelerated inverses range from $5.71\times10^{-15}$ to $5.18\times10^{-13}$. Each reported time is the median elapsed time per invocation obtained with the adaptive batching protocol described above.

\begin{figure}[H]
\centering
\begin{minipage}[t]{0.49\textwidth}
\centering
\includegraphics[width=\linewidth]{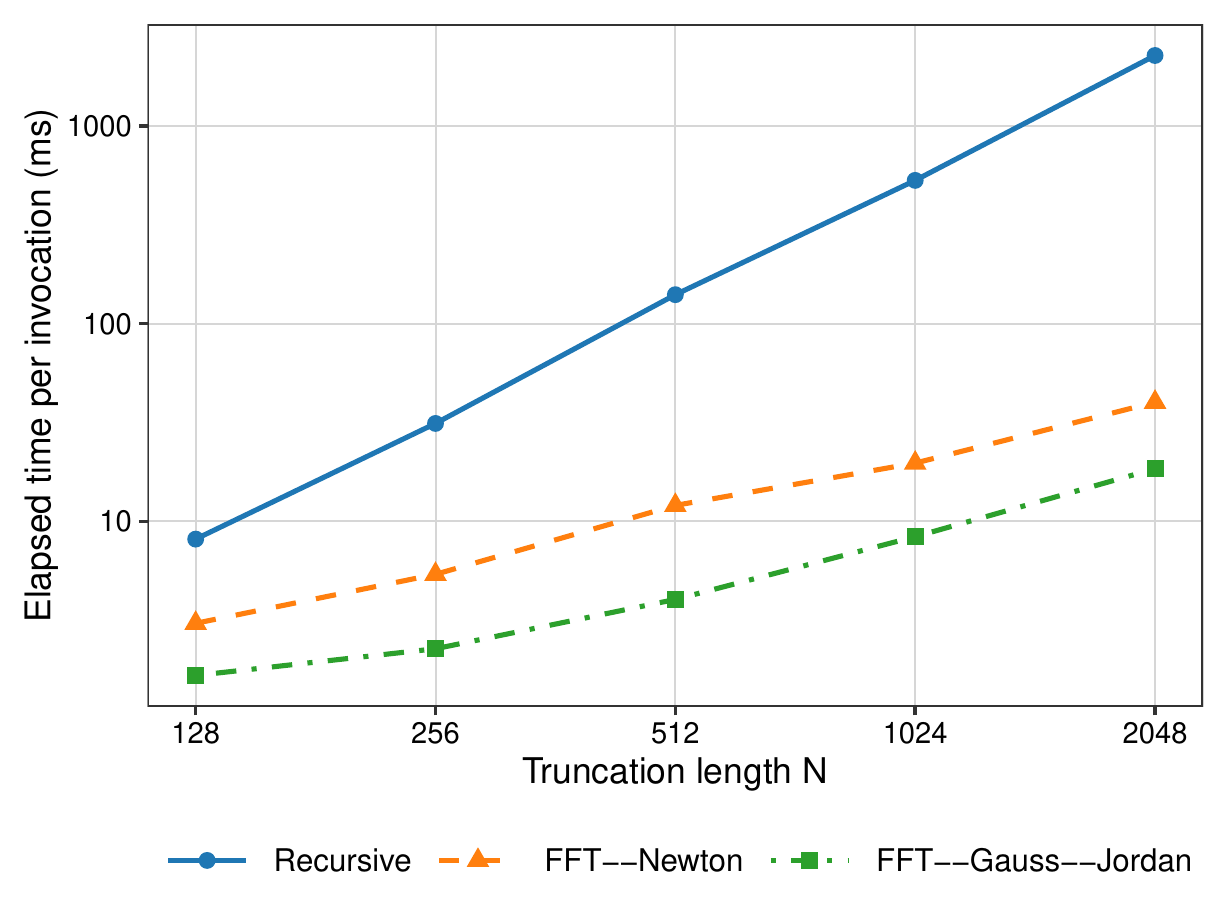}
\end{minipage}
\hfill
\begin{minipage}[t]{0.49\textwidth}
\centering
\includegraphics[width=\linewidth]{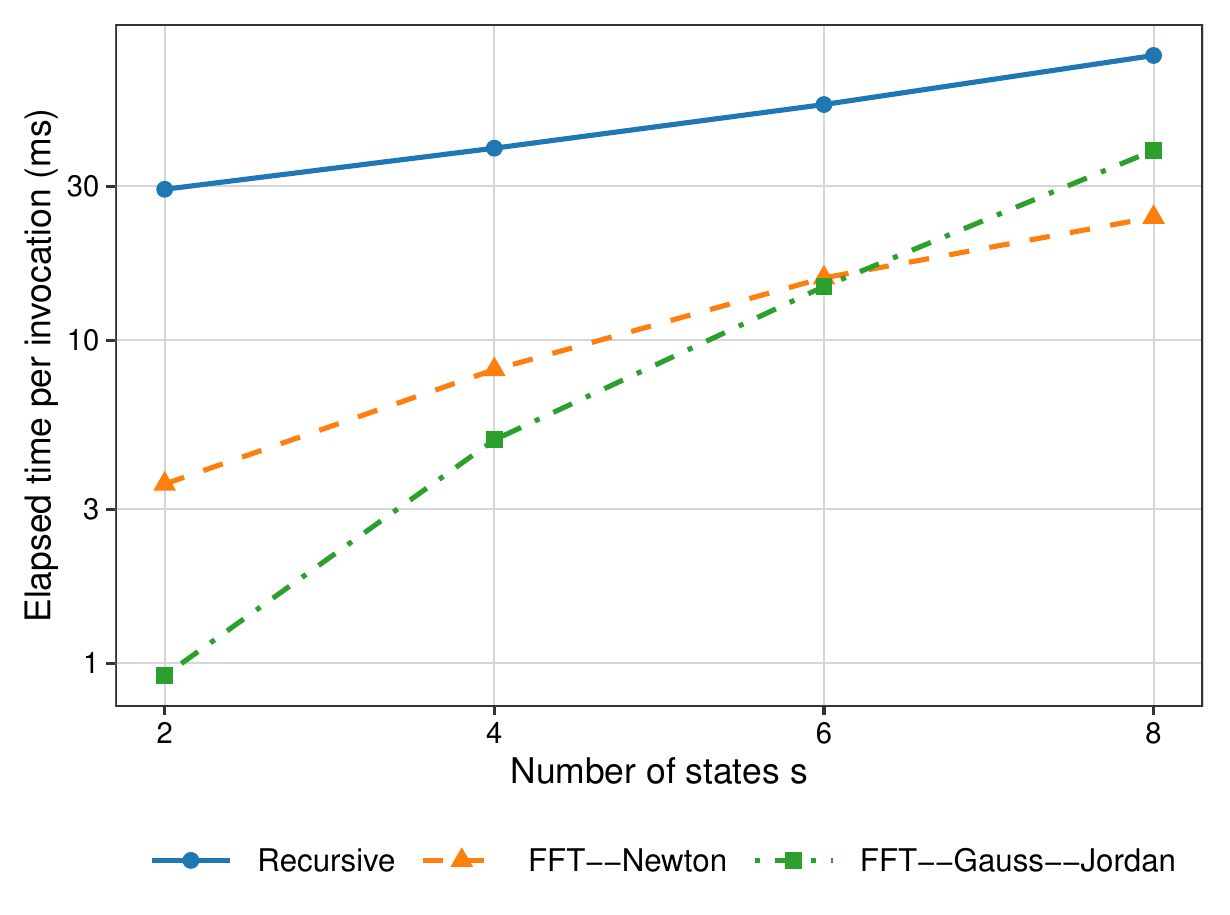}
\end{minipage}
\caption{Elapsed-time scaling for the coefficient recursion, FFT--Newton inversion and FFT--Gauss--Jordan elimination. The horizon experiment (left) uses $s=3$ and $N=2^7,\ldots,2^{11}$; the state-dimension experiment (right) uses $N=256$ and $s\in\{2,4,6,8\}$. The horizon panel uses logarithmic axes and the state-dimension panel uses a logarithmic ordinate. Each point is the median elapsed time per invocation over five adaptively sized batches, each lasting at least $0.5$ seconds after one untimed warm-up run.}
\label{fig:scaling}
\end{figure}

\begin{table}[H]
\centering
\small
\begin{tabular}{rccccc}
\toprule
$N$ & Rec. (ms) & Newton (ms) & GJ (ms) & Newton sp. & GJ sp.\\
\midrule
128  & 8.125    & 3.047  & 1.660  & 2.7  & 4.9\\
256  & 31.250   & 5.391  & 2.266  & 5.8  & 13.8\\
512  & 140.000  & 12.031 & 4.023  & 11.6 & 34.8\\
1024 & 530.000  & 19.687 & 8.359  & 26.9 & 63.4\\
2048 & 2270.000 & 40.000 & 18.438 & 56.8 & 123.1\\
\bottomrule
\end{tabular}
\caption{Median elapsed time per invocation, in milliseconds, and speed-up relative to the coefficient recursion in the horizon-scaling experiment.}
\label{tab:execution-times}
\end{table}

For the horizons at which the direct recursion is used as a reference, let $\psi_{\mathrm{rec}}$ denote the inverse obtained from the coefficient recursion \eqref{eq:inverse-recursion}, and let $\widetilde\psi$ denote the accelerated inverse under comparison. Define the scaled discrepancy

\begin{equation*}
d_N
=
\max_{0\leq k<N}
\frac{\|\widetilde\psi(k)-\psi_{\mathrm{rec}}(k)\|_F}
{\max\{\|\psi_{\mathrm{rec}}(k)\|_F,\sqrt{u}\}}.
\end{equation*}

Table~\ref{tab:inverse-accuracy} reports $d_N$ and the left and right residuals. The discrepancies are of the order of floating-point roundoff, and the residuals provide an independent check of the computed inverses. The fact that $d_N$ is smaller than the residuals is not contradictory: $d_N$ is a maximum coefficientwise discrepancy with respect to the recursive numerical reference, whereas each residual is an $\ell^1$-type norm over the complete horizon and accumulates the effects of convolution and roundoff across all coefficients.

\begin{table}[H]
\centering
\small
\begin{tabular}{ccccc}
\toprule
$N$ & Method & $d_N$ & Left residual & Right residual\\
\midrule
128 & FFT--Newton & $2.18\times10^{-15}$ & $5.71\times10^{-15}$ & $5.91\times10^{-15}$\\
128 & FFT--Gauss--Jordan & $6.54\times10^{-15}$ & $8.25\times10^{-15}$ & $7.38\times10^{-15}$\\
512 & FFT--Newton & $7.33\times10^{-15}$ & $3.23\times10^{-14}$ & $5.39\times10^{-14}$\\
512 & FFT--Gauss--Jordan & $2.02\times10^{-14}$ & $3.19\times10^{-14}$ & $2.72\times10^{-14}$\\
2048 & FFT--Newton & $2.75\times10^{-14}$ & $2.06\times10^{-13}$ & $5.18\times10^{-13}$\\
2048 & FFT--Gauss--Jordan & $2.33\times10^{-14}$ & $1.58\times10^{-13}$ & $1.51\times10^{-13}$\\
\bottomrule
\end{tabular}
\caption{Scaled discrepancy $d_N$ from the coefficient recursion and left and right residuals of the FFT--Newton and FFT--Gauss--Jordan inverses for selected horizons.}
\label{tab:inverse-accuracy}
\end{table}

For a first-entrance calculation, shifted Poisson sojourn distributions are used with

\begin{equation*}
\lambda_{12}=8,
\qquad
\lambda_{21}=5,
\qquad
\lambda_{23}=10,
\qquad
\lambda_{31}=7.
\end{equation*}

The shift guarantees $q(0)=O_s$. Let $D=\{1\}$ be the normal state, let $C=\{2,3\}$, and take $\alpha_C=(1,0)$, corresponding to the degraded state. We report the scalar first-restoration mass and survival probability

\begin{equation*}
 g_D^{\alpha}(k):=\alpha_Cg_D(k)\one_D,
 \qquad
 S_D^{\alpha}(k):=1-\alpha_CG_D(k)\one_D.
\end{equation*}

Figure~\ref{fig:first-entrance} compares direct convolution with the FFT--Newton calculation. The maximum absolute discrepancy between the two first-restoration mass functions is $1.11\times10^{-16}$, while both the left and right residuals are $7.53\times10^{-16}$. At the final reported time, the cumulative entrance probability equals one to the displayed precision.

\begin{figure}[H]
\centering
\begin{minipage}[t]{0.49\textwidth}
\centering
\includegraphics[width=\linewidth]{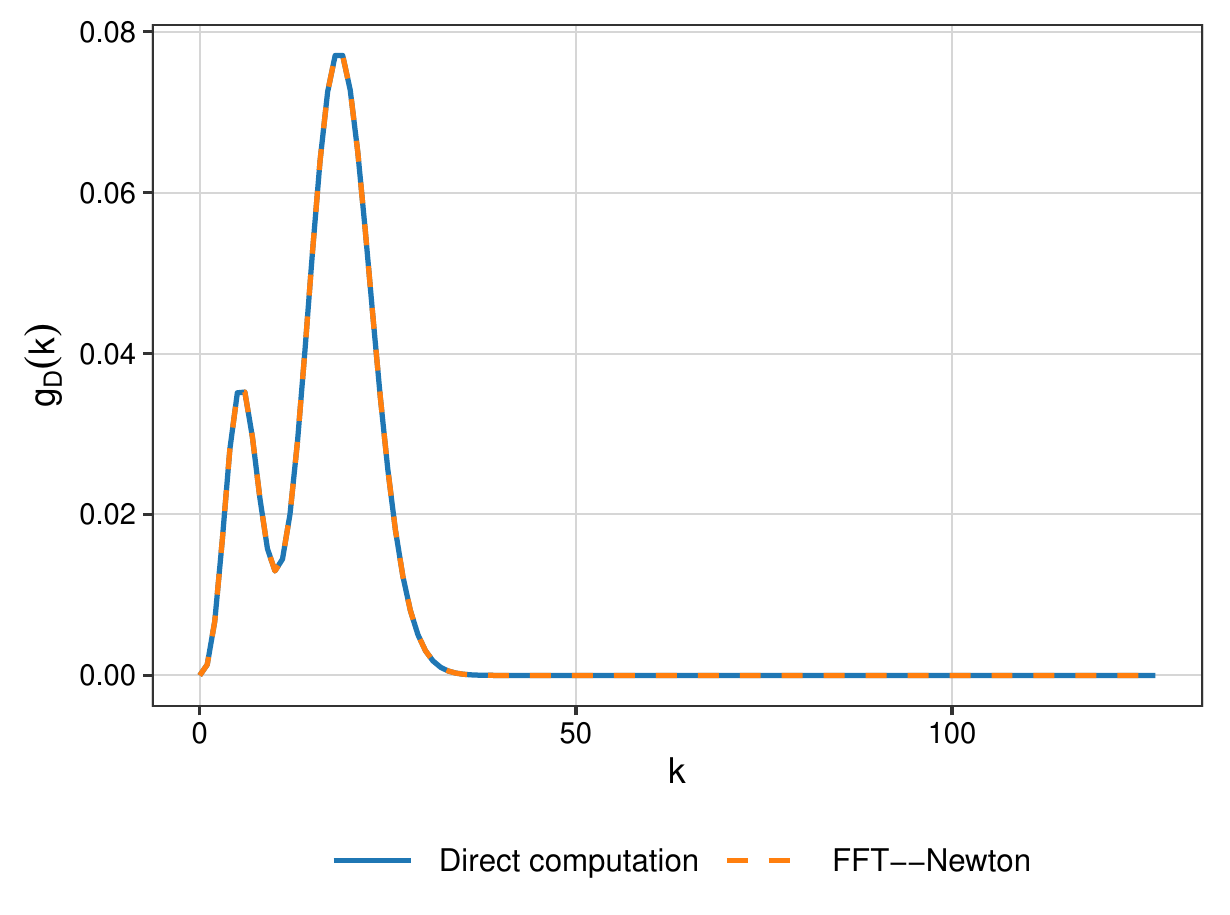}
\end{minipage}
\hfill
\begin{minipage}[t]{0.49\textwidth}
\centering
\includegraphics[width=\linewidth]{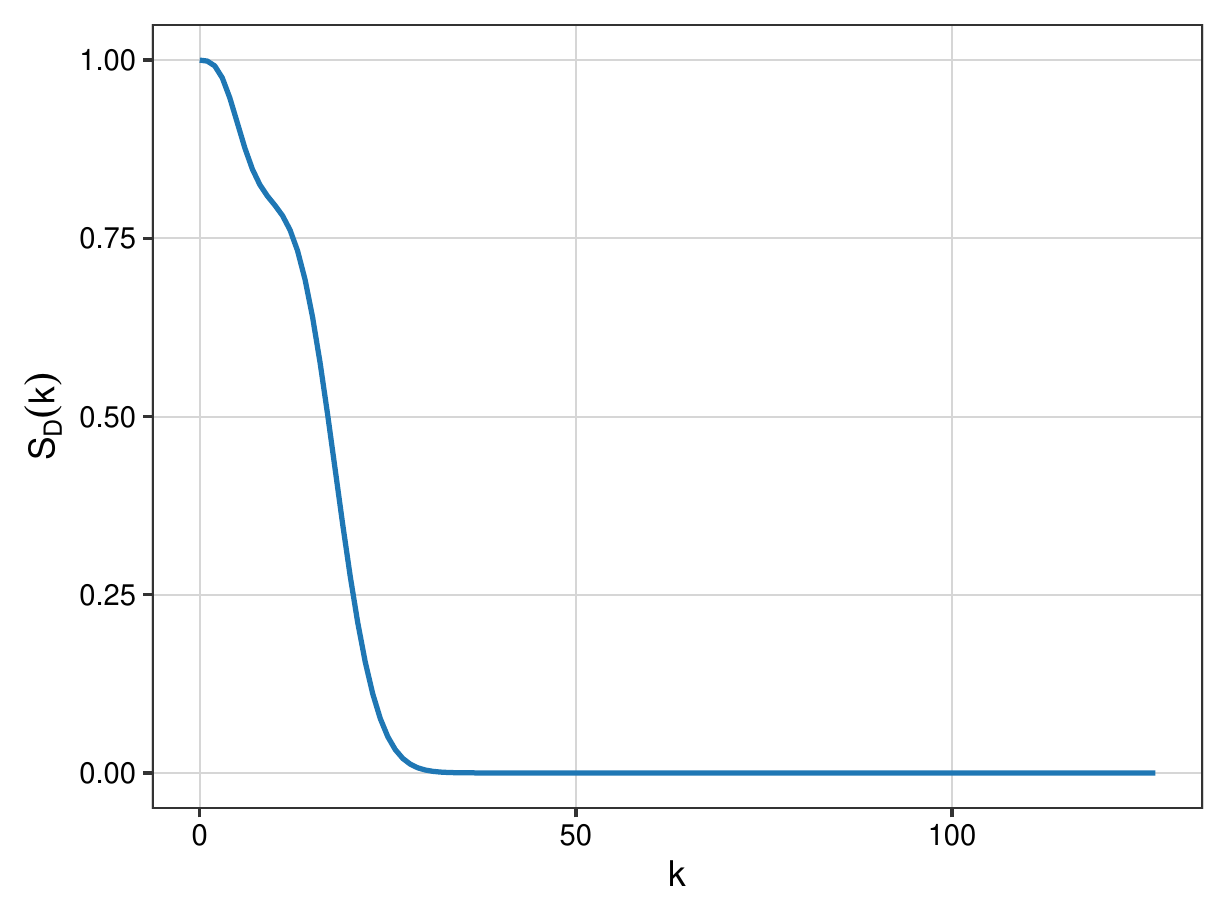}
\end{minipage}
\caption{First-restoration calculation for $D=\{1\}$, $C=\{2,3\}$, $\alpha_C=(1,0)$ and $N=128$, with shifted Poisson parameters $\lambda_{12}=8$, $\lambda_{21}=5$, $\lambda_{23}=10$ and $\lambda_{31}=7$. The left plot compares the FFT--Newton mass $g_D^{\alpha}(k)$ with the direct-convolution curve; the right plot gives the survival probability $S_D^{\alpha}(k)$.}
\label{fig:first-entrance}
\end{figure}

\subsection{Continuous-time semi-Markov model}

In this example only, we relabel the state space as $E=\{0,1,2,3\}$. We use the four-state model of \citet{LiuXingZhou2019}, with state $0$ (clean), $1$ (acquisition), $2$ (infection), and $3$ (fraud complete). The up-state set is $\mathcal U=\{0,1,2\}$ and the down-state set is $\mathcal D=\{3\}$. The exponential benchmark has generator

\begin{equation}\label{eq:generator}
\mathcal A=
\begin{pmatrix}
-0.2&0.2&0&0\\
0.01&-0.11&0.1&0\\
0.15&0.3&-0.85&0.4\\
0&0&0.5&-0.5
\end{pmatrix}.
\end{equation}

The embedded transition matrix used in every continuous-time experiment is therefore $P^{\mathrm{emb}}$, where

\begin{equation}\label{eq:embedded-continuous}
P^{\mathrm{emb}}
=
\begin{pmatrix}
0&1&0&0\\
1/11&0&10/11&0\\
3/17&6/17&0&8/17\\
0&0&1&0
\end{pmatrix}.
\end{equation}

For initial distribution $\alpha=(1,0,0,0)$, let $\mathcal A_{\mathcal U}$ be the principal submatrix of $\mathcal A$ on $\mathcal U$, let $\alpha_{\mathcal U}$ be the restriction of $\alpha$ to $\mathcal U$, let $\one_{\mathcal U}$ be the vector of ones in $\R^{|\mathcal U|}$, and let $\one_{E,\mathcal U}$ be the indicator vector of $\mathcal U$ in $\R^4$. The exact exponential reliability and availability are

\begin{equation*}
R(t)=\alpha_{\mathcal U}\exp(\mathcal A_{\mathcal U}t)\one_{\mathcal U},
\qquad
A(t)=\alpha\exp(\mathcal A t)\one_{E,\mathcal U}.
\end{equation*}

The computation uses $N=2^{15}$, $h=0.005$, and hence $T=(N-1)h=163.835$. Figure~\ref{fig:exponential} compares the exact functions with the two discrete approximations. For the displayed grid, the maximum mean-value errors are $2.53\times10^{-8}$ for availability and $7.77\times10^{-8}$ for reliability. The process-discretization errors are $6.41\times10^{-5}$ and $1.86\times10^{-4}$, respectively.

\begin{figure}[H]
\centering
\includegraphics[width=\linewidth]{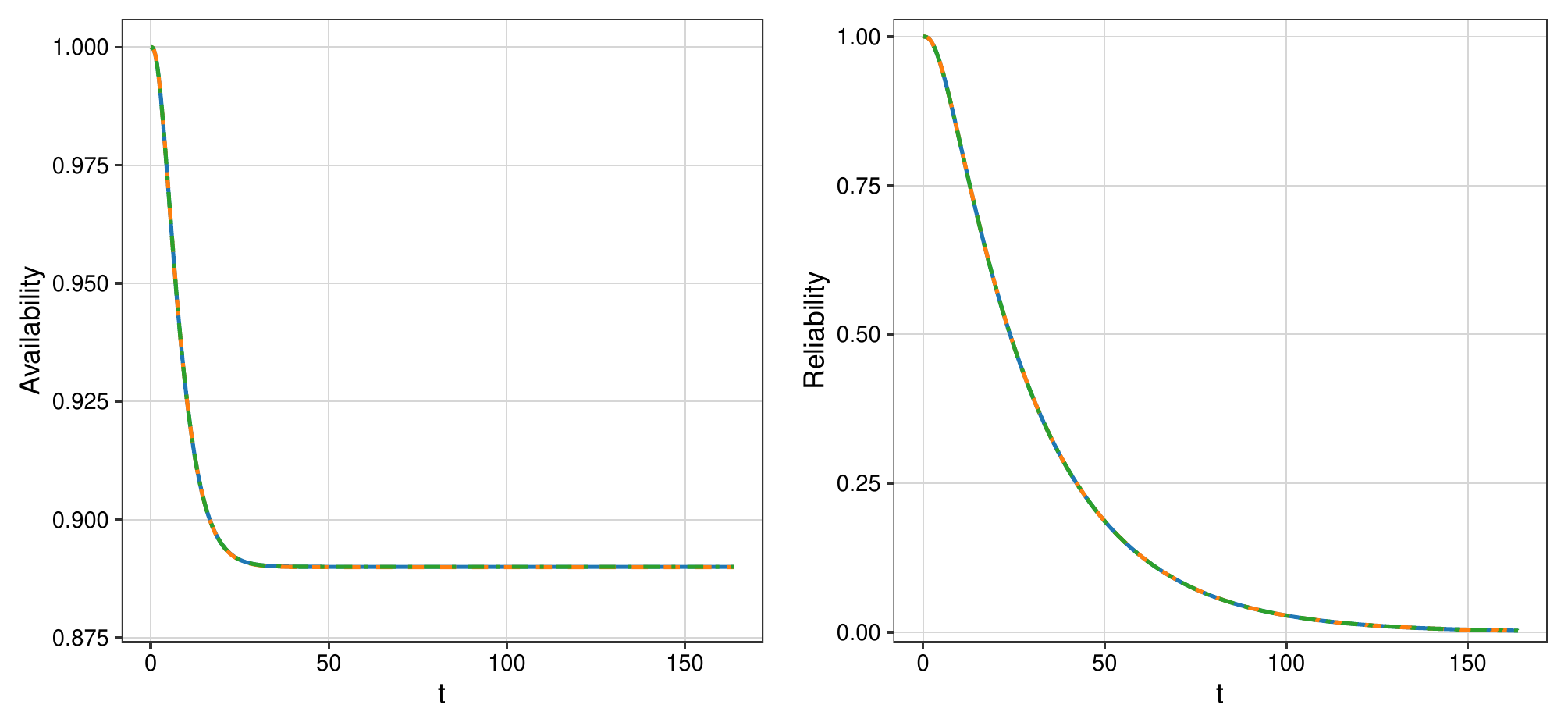}
\caption{Exponential benchmark for the four-state model on $[0,163.835]$, with $h=0.005$ and $N=2^{15}$. In both panels, the blue solid curve is the mean-value Stieltjes approximation, the orange dashed curve is the process-discretization approximation, and the green dash-dotted curve is the exact Markov solution. The availability panel focuses on values above $0.88$, whereas no artificial vertical restriction is imposed on reliability.}
\label{fig:exponential}
\end{figure}

A fixed-horizon refinement test for the mean-value approximation
\eqref{eq:mre-discrete} is given in Table~\ref{tab:convergence}. The observed order is approximately two, in agreement with the smooth-kernel conclusion \eqref{eq:mre-order-two}.

\begin{table}[H]
\centering
\small
\begin{tabular}{ccccccc}
\toprule
$h$ & $E_h^P$ & order & $E_h^R$ & order & $E_h^A$ & order\\
\midrule
0.400 & $4.7584\times10^{-3}$ & -- & $4.9635\times10^{-4}$ & -- & $1.6120\times10^{-4}$ & --\\
0.200 & $1.1923\times10^{-3}$ & 1.997 & $1.2427\times10^{-4}$ & 1.998 & $4.0345\times10^{-5}$ & 1.998\\
0.100 & $2.9732\times10^{-4}$ & 2.004 & $3.1078\times10^{-5}$ & 1.999 & $1.0089\times10^{-5}$ & 2.000\\
0.050 & $7.4285\times10^{-5}$ & 2.001 & $7.7702\times10^{-6}$ & 2.000 & $2.5224\times10^{-6}$ & 2.000\\
0.025 & $1.8571\times10^{-5}$ & 2.000 & $1.9426\times10^{-6}$ & 2.000 & $6.3062\times10^{-7}$ & 2.000\\
\bottomrule
\end{tabular}
\caption{Fixed-horizon errors on $[0,40]$ for the mean-value
approximation in the exponential benchmark. Here $E_h^P$ is the maximum Frobenius error of the transition matrix, while $E_h^R$ and $E_h^A$ are the maximum absolute reliability and availability errors. The observed orders are computed as $\log_2(E_h/E_{h/2})$.}
\label{tab:convergence}
\end{table}

Figure~\ref{fig:lognormal} gives a representative heavy-tailed case with Lognormal sojourn times. On the displayed grid, the deterministic methods differ by at most $2.20\times10^{-5}$ for availability and $3.99\times10^{-5}$ for reliability. Since no closed-form solution is available in this case, an independent pathwise validation was performed with $1{,}000{,}000$ Monte Carlo trajectories and pointwise $95\%$ normal confidence intervals. The maximum pointwise half-width is $9.80\times10^{-4}$. Table~\ref{tab:lognormal-validation} reports the maximum and root mean square discrepancies of each deterministic approximation from the Monte Carlo curves. Every maximum discrepancy is below the maximum pointwise half-width. Since the difference between the two deterministic approximations is one order of magnitude smaller than the Monte Carlo uncertainty, the simulation serves as an independent pathwise validation rather than as a means of ranking the two deterministic schemes.

\begin{table}[H]
\centering
\small
\begin{tabular}{lcccc}
\toprule
Method & $\max|\Delta A|$ & $\operatorname{RMSE}(A)$ & $\max|\Delta R|$ & $\operatorname{RMSE}(R)$\\
\midrule
Mean-value & $5.35\times10^{-4}$ & $1.46\times10^{-4}$ & $6.45\times10^{-4}$ & $1.73\times10^{-4}$\\
Process discretization & $5.14\times10^{-4}$ & $1.44\times10^{-4}$ & $6.72\times10^{-4}$ & $1.88\times10^{-4}$\\
\bottomrule
\end{tabular}
\caption{Maximum absolute discrepancy and root mean square error (RMSE) of the deterministic Lognormal curves relative to the pathwise Monte Carlo estimates based on $10^6$ trajectories.}
\label{tab:lognormal-validation}
\end{table}

\begin{figure}[H]
\centering
\begin{minipage}[t]{0.49\textwidth}
\centering
\includegraphics[width=\linewidth]{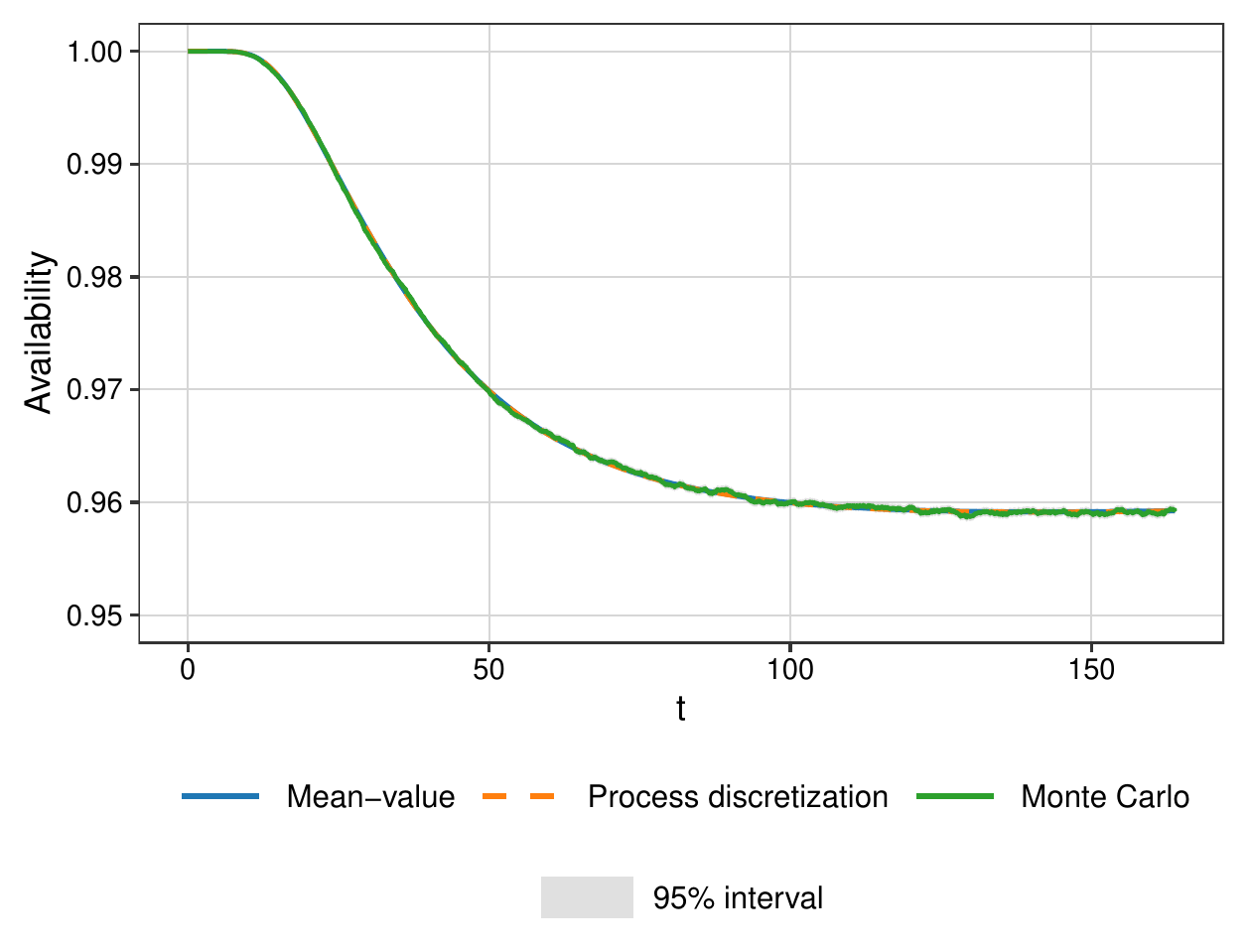}
\end{minipage}
\hfill
\begin{minipage}[t]{0.49\textwidth}
\centering
\includegraphics[width=\linewidth]{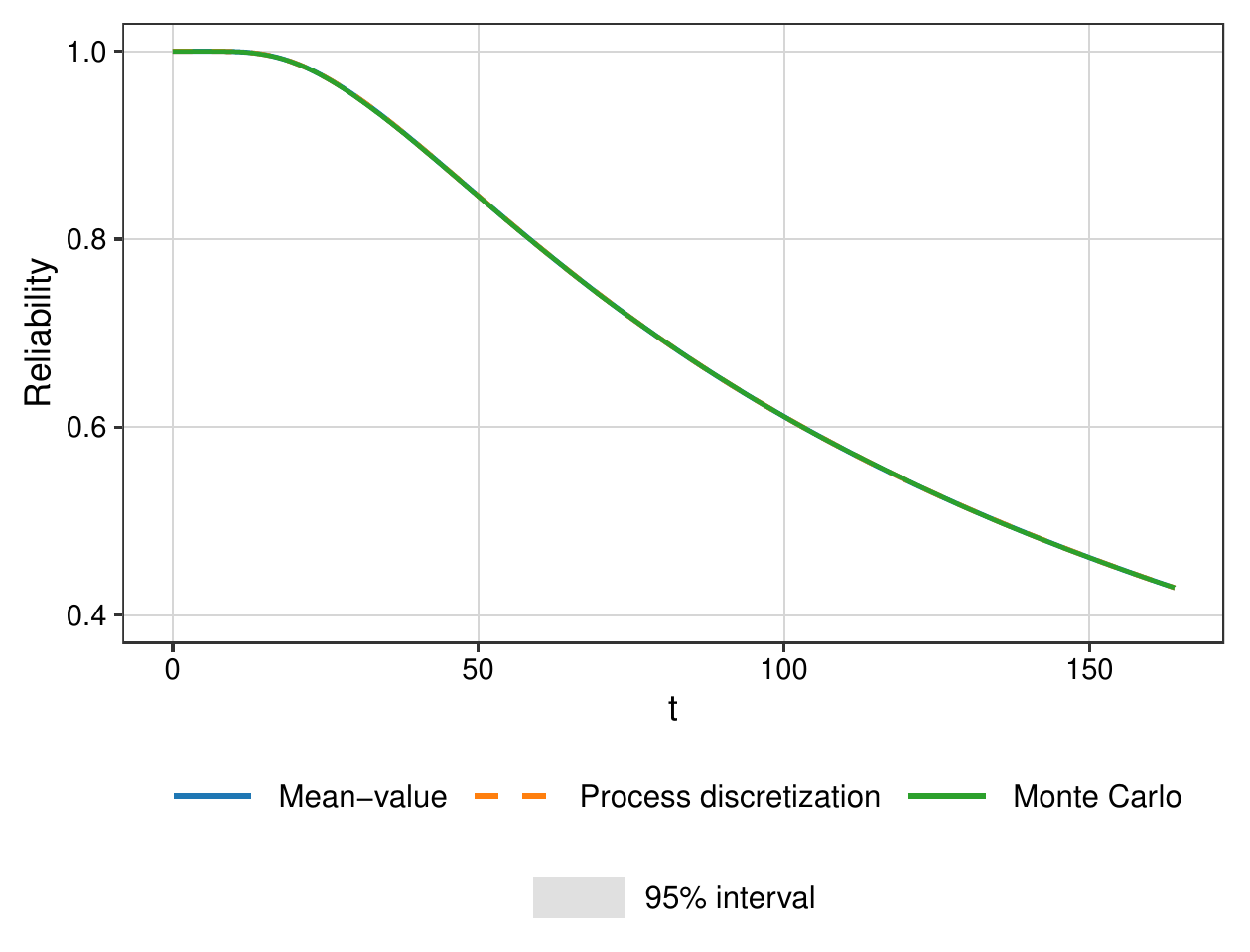}
\end{minipage}
\caption{Lognormal semi-Markov model on $[0,163.835]$, with $h=0.005$ and $N=2^{15}$. The mean-value and process-discretization approximations are compared with pathwise Monte Carlo estimates based on $1{,}000{,}000$ trajectories; the shaded regions are pointwise $95\%$ normal confidence intervals. Availability is shown on the left with ordinate restricted to $[0.95,1]$, while reliability is shown on the right over $[0.4,1]$.}
\label{fig:lognormal}
\end{figure}

\section{Discussion}\label{sec:discussion}

The finite-horizon quotient algebra separates discrete-time computation from infinite-tail truncation: the first $N$ inverse coefficients depend only on the first $N$ input coefficients. FFT multiplication then reduces the convolution cost from quadratic to quasi-linear order for fixed matrix dimension. The forward-error estimate identifies both transform error and frequency-domain matrix multiplication, while the residual bounds provide an a posteriori check of the computed inverse.

The Stieltjes approximation uses the same discrete convolution structure. Its deterministic error is controlled by the modulus of continuity of the continuation term and the row variation of the Stieltjes kernel. For semi-Markov kernels, this variation is bounded probabilistically. The weighted-resolvent argument propagates the local error through the Markov renewal equation and yields convergence on fixed time intervals.

The numerical calculations illustrate three distinct aspects: scaling of the inverse algorithms, fixed-horizon convergence against an exact Markov benchmark, and computation of a first-entrance distribution in addition to reliability and availability. At the largest tested horizon, FFT--Newton and FFT--Gauss--Jordan are respectively $56.8$ and $123.1$ times faster than the coefficient recursion, while the state-dimension experiment shows the stronger dependence of Gauss--Jordan elimination on $s$. In the Lognormal model, the maximum discrepancies from the Monte Carlo curves remain below the maximum pointwise confidence half-width, and the two deterministic schemes agree much more closely with each other than the Monte Carlo uncertainty permits one to distinguish. Further applications can use the same inversion framework for occupation, renewal-visit, and reward quantities.

\section*{Statements and declarations}

\paragraph{Funding}
The authors declare that no funds, grants, or other support were received during the preparation of this manuscript.

\paragraph{Competing interests}
The authors have no relevant financial or non-financial interests to disclose.

\paragraph{Author contributions}
Both authors contributed to the conception, mathematical development, numerical methodology, verification and preparation of the manuscript. Both authors read and approved the final manuscript.

\paragraph{Data and code availability}
No external datasets were used in this study. The R scripts used to generate the numerical results and figures are available from the corresponding author upon reasonable request.

\appendix

\section{Algebraic proofs}\label{app:algebra-proofs}

We first prove Proposition~\ref{prop:finite-horizon}.

\begin{proof}
For square sequences, the map $\Pi_N$ is the canonical ring homomorphism from $\mathfrak S_s$ onto $\mathfrak A_{s,N}$. Indeed, for every $k<N$, the coefficient of $x^k$ in $(\Pi_NA)(\Pi_NB)$ is

\begin{equation*}
\sum_{\ell=0}^{k}A(\ell)B(k-\ell)=(A*B)(k),
\end{equation*}

so $\Pi_N(A*B)=(\Pi_NA)(\Pi_NB)$ modulo $x^N$. Applying $\Pi_N$ to $A*A^{(-1)}=\ez=A^{(-1)}*A$ gives

\begin{equation*}
(\Pi_NA)\Pi_N(A^{(-1)})
=
I_s
=
\Pi_N(A^{(-1)})(\Pi_NA).
\end{equation*}

in $\mathfrak A_{s,N}$. Uniqueness of the inverse in a unital ring proves \eqref{eq:finite-horizon}.
\end{proof}

We next prove Proposition~\ref{prop:perturbation}.

\begin{proof}
Set $A=\ez-q$ and $\widetilde A=\ez-\widetilde q$. Since $\psi=A^{(-1)}$ and $\widetilde\psi=\widetilde A^{(-1)}$, associativity and the two-sided inverse property give

\begin{equation*}
\begin{aligned}
\widetilde\psi-\psi
&=\widetilde\psi*A*\psi-\widetilde\psi*\widetilde A*\psi\\
&=\widetilde\psi*(A-\widetilde A)*\psi\\
&=\widetilde\psi*(\widetilde q-q)*\psi.
\end{aligned}
\end{equation*}

Applying the second estimate in \eqref{eq:young} twice yields

\begin{equation*}
\|\widetilde\psi-\psi\|_{1,N}
\leq
\|\widetilde\psi\|_{1,N}
\|\widetilde q-q\|_{1,N}
\|\psi\|_{1,N},
\end{equation*}

which proves \eqref{eq:perturbation-bound}. Moreover,

\begin{equation*}
\widetilde M-M
=(\widetilde\psi-\psi)*L+\widetilde\psi*(\widetilde L-L).
\end{equation*}

A further application of \eqref{eq:young} to the two terms gives \eqref{eq:functional-perturbation}.
\end{proof}

We finally prove Proposition~\ref{prop:residual}.

\begin{proof}
Let $A=\ez-q$ and $\psi=A^{(-1)}$. Consider first the right residual $\rho_R=\ez-\widetilde\psi*A$. Then

\begin{equation*}
\widetilde\psi*A=\ez-\rho_R.
\end{equation*}

Multiplication on the right by $\psi$ gives

\begin{equation*}
\widetilde\psi=(\ez-\rho_R)*\psi.
\end{equation*}

If $\|\rho_R\|_{1,N}<1$, the Neumann series converges in $\mathfrak A_{s,N}$ and

\begin{equation*}
(\ez-\rho_R)^{(-1)}
=
\sum_{n\geq0}\rho_R^{(n)},
\qquad
\|(\ez-\rho_R)^{(-1)}\|_{1,N}
\leq
\frac{1}{1-\|\rho_R\|_{1,N}}.
\end{equation*}

Consequently,

\begin{equation*}
\psi=(\ez-\rho_R)^{(-1)}*\widetilde\psi
\end{equation*}

and

\begin{equation*}
\begin{aligned}
\psi-\widetilde\psi
&=\left[(\ez-\rho_R)^{(-1)}-\ez\right]*\widetilde\psi\\
&=(\ez-\rho_R)^{(-1)}*\rho_R*\widetilde\psi.
\end{aligned}
\end{equation*}

The second estimate in \eqref{eq:young} therefore gives

\begin{equation*}
\|\psi-\widetilde\psi\|_{1,N}
\leq
\frac{\|\rho_R\|_{1,N}}{1-\|\rho_R\|_{1,N}}
\|\widetilde\psi\|_{1,N}.
\end{equation*}

For the left residual $\rho_L=\ez-A*\widetilde\psi$, one has

\begin{equation*}
A*\widetilde\psi=\ez-\rho_L.
\end{equation*}

Multiplication on the left by $\psi$ gives

\begin{equation*}
\widetilde\psi=\psi*(\ez-\rho_L).
\end{equation*}

If $\|\rho_L\|_{1,N}<1$, then

\begin{equation*}
\psi=\widetilde\psi*(\ez-\rho_L)^{(-1)}
\end{equation*}

and hence

\begin{equation*}
\psi-\widetilde\psi
=
\widetilde\psi*\rho_L*(\ez-\rho_L)^{(-1)}.
\end{equation*}

The same norm estimate proves \eqref{eq:residual-bound} with $\rho=\rho_L$.
\end{proof}

\section{FFT forward-error calculation}\label{app:fft-error}

We prove Proposition~\ref{prop:fft-error}.

\begin{proof}
Write the computed transforms as $\widehat A_{\fl}=\widehat A+E_A$ and $\widehat B_{\fl}=\widehat B+E_B$, with

\begin{equation*}
\|E_A\|_{G,L}\leq\sigma_L\|\widehat A\|_{G,L},
\qquad
\|E_B\|_{G,L}\leq\sigma_L\|\widehat B\|_{G,L}.
\end{equation*}

Before rounding the matrix products, the perturbation is

\begin{equation*}
\widehat A E_B+E_A\widehat B+E_AE_B.
\end{equation*}

For any two matrix arrays $X=(X(m))_{m\in I_L}$ and $Y=(Y(m))_{m\in I_L}$,

\begin{equation*}
\|XY\|_{G,L}^2
=
\sum_{m\in I_L}\|X(m)Y(m)\|_F^2
\leq
\|X\|_{G,L}^2\|Y\|_{G,L}^2.
\end{equation*}

Using this estimate for the three terms and the bounds on $E_A$ and $E_B$ gives

\begin{equation*}
\|(\widehat A+E_A)(\widehat B+E_B)-\widehat A\widehat B\|_{G,L}
\leq
(2\sigma_L+\sigma_L^2)\|\widehat A\|_{G,L}\|\widehat B\|_{G,L}.
\end{equation*}

The standard matrix multiplication model adds at most

\begin{equation*}
\gamma_s(1+\sigma_L)^2\|\widehat A\|_{G,L}\|\widehat B\|_{G,L}.
\end{equation*}

Let $Z(m)=\widehat A(m)\widehat B(m)$ and let $Z_{\fl}$ denote the computed frequency-domain product. The preceding bounds give

\begin{equation*}
\|Z_{\fl}-Z\|_{G,L}
\leq
\beta_{L,s}\|\widehat A\|_{G,L}\|\widehat B\|_{G,L},
\end{equation*}

and hence

\begin{equation*}
\|Z_{\fl}\|_{G,L}
\leq
(1+\beta_{L,s})\|\widehat A\|_{G,L}\|\widehat B\|_{G,L}.
\end{equation*}

Let $\widetilde C=\operatorname{IDFT}_L(Z_{\fl})$. Parseval's identity and the inverse-transform error model yield

\begin{equation*}
\begin{aligned}
\|C-C_{\fl}\|_{G,L}
&\leq
\|C-\widetilde C\|_{G,L}
+
\|\widetilde C-C_{\fl}\|_{G,L}\\
&\leq
L^{-1/2}
\left[\beta_{L,s}+\sigma_L(1+\beta_{L,s})\right]
\|\widehat A\|_{G,L}\|\widehat B\|_{G,L}.
\end{aligned}
\end{equation*}

A second application of \eqref{eq:parseval} proves \eqref{eq:fft-forward-error}; division by $\|C\|_{G,L}$ proves \eqref{eq:fft-relative-error}.
\end{proof}

\section{Stieltjes and Markov renewal error bounds}\label{app:mre-proof}

We first prove Proposition~\ref{prop:stieltjes-deterministic}.

\begin{proof}
For fixed $i,j\in E$ and $t_m=mh$, write the error as

\begin{equation*}
E_{d,ij}(m)
=
\sum_{r\in E}\sum_{\ell=1}^{m}
\int_{(t_{\ell-1},t_\ell]}
\left[
B_{rj}(t_m-x)
-
\frac{B_{rj}(t_m-t_{\ell-1})+B_{rj}(t_m-t_\ell)}{2}
\right]dA_{ir}(x).
\end{equation*}

For $x\in(t_{\ell-1},t_\ell]$, both $t_m-x$ and the two endpoint arguments in the preceding display differ by at most $h$. Therefore,

\begin{equation*}
\left|
B_{rj}(t_m-x)
-
\frac{B_{rj}(t_m-t_{\ell-1})+B_{rj}(t_m-t_\ell)}{2}
\right|
\leq
\omega_B(h;T).
\end{equation*}

The variation inequality for Lebesgue--Stieltjes integrals now gives

\begin{equation*}
|E_{d,ij}(m)|
\leq
\omega_B(h;T)
\sum_{r\in E}\operatorname{Var}_{[0,T]}(A_{ir}),
\end{equation*}

which proves \eqref{eq:stieltjes-entry-error}. Since the same bound holds for all $s^2$ entries, \eqref{eq:stieltjes-frob-error} follows. The Lipschitz statement is obtained from $\omega_B(h;T)\leq L_B(T)h$.
\end{proof}

We next prove Corollary~\ref{cor:stieltjes-second-order}.

\begin{proof}
Fix an interval $[a,b]$ of length $h$ and let $c=(a+b)/2$. For $f\in C^2([a,b])$, Taylor's formula gives

\begin{equation*}
f(x)-\frac{f(a)+f(b)}{2}
=
f'(c)(x-c)+R_f(x),
\qquad
|R_f(x)|\leq\frac14\|f''\|_{\infty}h^2.
\end{equation*}

If $a_0\in C^1([a,b])$, then

\begin{equation*}
\left|\int_a^b(x-c)a_0(x)\,dx\right|
=
\left|\int_a^b(x-c)[a_0(x)-a_0(c)]\,dx\right|
\leq
\frac{h^3}{12}\|a_0'\|_{\infty}.
\end{equation*}

Moreover,

\begin{equation*}
\left|\int_a^bR_f(x)a_0(x)\,dx\right|
\leq
\frac{h^3}{4}\|f''\|_{\infty}\|a_0\|_{\infty}.
\end{equation*}

Apply these estimates with $f(x)=B_{rj}(t_m-x)$ and $a_0(x)=a_{ir}(x)$ on every grid interval. Summing over at most $T/h$ intervals and over $r\in E$ gives the entrywise bound

\begin{equation*}
|E_{d,ij}(m)|
\leq
sT h^2
\left(
\frac{M_{A,1}M_{B,1}}{12}
+
\frac{M_{A,0}M_{B,2}}{4}
\right).
\end{equation*}

Taking the Frobenius norm over the $s^2$ entries proves \eqref{eq:stieltjes-second-order}.
\end{proof}

We next prove Theorem~\ref{thm:mre-global}.

\begin{proof}
Let

\begin{equation*}
K_h^{\pi}(m):=K(mh),
\qquad
0\leq m\leq N_h,
\end{equation*}

and set $e_h=K_h^{\pi}-K_h$. Define the consistency error by

\begin{equation*}
\tau_h
=
K_h^{\pi}-L_h-\overline Q_h*K_h^{\pi}
+\frac12\Delta^+Q_h*(K(0)\ez).
\end{equation*}

At $m=0$, the continuous equation gives $K(0)=L(0)$ because $Q(0)=O_s$. In the discrete equation, $\overline Q_h(0)=\Delta Q_h(1)/2$ and $\Delta^+Q_h(0)=\Delta Q_h(1)$; the two terms containing $K_h(0)$ cancel, and hence $K_h(0)=L_h(0)=K(0)$. Subtracting \eqref{eq:mre-discrete} from the definition of $\tau_h$ therefore gives

\begin{equation}\label{eq:error-equation-app}
(\ez-\overline Q_h)*e_h=\tau_h.
\end{equation}

For $0\leq m\leq N_h$, direct expansion gives

\begin{equation*}
\left[
\overline Q_h*K_h^{\pi}
-
\frac12\Delta^+Q_h*(K(0)\ez)
\right](m)
=
\frac12
\sum_{\ell=1}^{m}
\Delta Q_h(\ell)
\left[
K((m-\ell+1)h)+K((m-\ell)h)
\right].
\end{equation*}

Thus $\tau_h(m)$ is exactly the error of the Stieltjes approximation \eqref{eq:stieltjes-discrete} with $A=Q$ and $B=K$. Since every $Q_{ij}$ is nondecreasing and

\begin{equation*}
\sum_{j\in E}\operatorname{Var}_{[0,T]}(Q_{ij})
=
\sum_{j\in E}Q_{ij}(T)
\leq1,
\end{equation*}

Proposition~\ref{prop:stieltjes-deterministic} yields

\begin{equation}\label{eq:local-error-app}
\|\tau_h(m)\|_F
\leq
s\omega_K(h;T),
\qquad
0\leq m\leq N_h.
\end{equation}

It remains to obtain a bound for the discrete resolvent which is uniform with respect to $h$. For $\lambda>0$, define

\begin{equation*}
r_\lambda
:=
\max_{i\in E}
\sum_{j\in E}
\int_{(0,\infty)}e^{-\lambda t}\,dQ_{ij}(t).
\end{equation*}

The measure $\sum_jdQ_{ij}$ has total mass one and no atom at zero. Hence dominated convergence gives $r_\lambda\to0$ as $\lambda\to\infty$. Choose $\lambda>0$ such that $r_\lambda<1$, and then choose $h_0>0$ such that

\begin{equation*}
\varrho
:=
\frac{1+e^{\lambda h_0}}{2}r_\lambda
<1.
\end{equation*}

For a matrix sequence $A$, define

\begin{equation*}
\|A\|_{\lambda,h}
:=
\max_{i\in E}
\sum_{j\in E}
\sum_{k\geq0}e^{-\lambda hk}|A_{ij}(k)|.
\end{equation*}

To verify submultiplicativity, let $C=A*B$. For every $i$,

\begin{equation*}
\begin{aligned}
\sum_j\sum_{n\geq0}e^{-\lambda hn}|C_{ij}(n)|
&\leq
\sum_{r}\sum_{k\geq0}e^{-\lambda hk}|A_{ir}(k)|
\sum_j\sum_{\ell\geq0}e^{-\lambda h\ell}|B_{rj}(\ell)|\\
&\leq
\|B\|_{\lambda,h}
\sum_r\sum_{k\geq0}e^{-\lambda hk}|A_{ir}(k)|.
\end{aligned}
\end{equation*}

Taking the maximum over $i$ gives

\begin{equation*}
\|A*B\|_{\lambda,h}
\leq
\|A\|_{\lambda,h}\|B\|_{\lambda,h}.
\end{equation*}

For the increments of the semi-Markov kernel, nonnegativity gives

\begin{equation*}
\begin{aligned}
\|\Delta Q_h\|_{\lambda,h}
&=
\max_i\sum_j\sum_{k\geq1}e^{-\lambda hk}
\left[Q_{ij}(kh)-Q_{ij}((k-1)h)\right]\\
&=
\max_i\sum_j\sum_{k\geq1}
\int_{((k-1)h,kh]}e^{-\lambda hk}\,dQ_{ij}(t)\\
&\leq
\max_i\sum_j\int_{(0,\infty)}e^{-\lambda t}\,dQ_{ij}(t)
=r_\lambda,
\end{aligned}
\end{equation*}

because $t\leq kh$ on the $k$th interval. Similarly, after the change of index $r=k+1$,

\begin{equation*}
\begin{aligned}
\|\Delta^+Q_h\|_{\lambda,h}
&=
\max_i\sum_j\sum_{k\geq0}e^{-\lambda hk}\Delta Q_{h,ij}(k+1)\\
&=e^{\lambda h}
\max_i\sum_j\sum_{r\geq1}e^{-\lambda hr}\Delta Q_{h,ij}(r)\\
&\leq e^{\lambda h}r_\lambda.
\end{aligned}
\end{equation*}

Thus, for $0<h\leq h_0$,

\begin{equation*}
\|\overline Q_h\|_{\lambda,h}
\leq
\frac{1+e^{\lambda h}}{2}r_\lambda
\leq\varrho<1.
\end{equation*}

The Neumann series therefore converges in the weighted convolution norm and gives

\begin{equation*}
U_h:=(\ez-\overline Q_h)^{(-1)}
=
\sum_{n\geq0}\overline Q_h^{(n)},
\qquad
\|U_h\|_{\lambda,h}
\leq
\frac{1}{1-\varrho}.
\end{equation*}

Let $\Pi_{N_h+1}$ denote the projection onto the coefficients of orders
$0,\ldots,N_h$. By Proposition~\ref{prop:finite-horizon}, equivalently by
projecting the preceding identity onto the truncated algebra, the coefficients
$U_h(0),\ldots,U_h(N_h)$ coincide with those of the inverse of
$\Pi_{N_h+1}(\ez-\overline Q_h)$ in the truncated algebra used in
\eqref{eq:mre-discrete}. Since $\overline Q_h$ has nonnegative coefficients,
all coefficients of $U_h$ are nonnegative. If $mh\leq T$, then

\begin{equation*}
\begin{aligned}
\sum_{\ell=0}^{m}\|U_h(\ell)\|_F
&\leq
\sum_{\ell=0}^{m}\sum_{i,j\in E}U_{h,ij}(\ell)\\
&\leq
e^{\lambda T}
\sum_{i\in E}\sum_{j\in E}\sum_{\ell\geq0}
e^{-\lambda h\ell}U_{h,ij}(\ell)\\
&\leq
s e^{\lambda T}\|U_h\|_{\lambda,h}
\leq
\frac{s e^{\lambda T}}{1-\varrho}.
\end{aligned}
\end{equation*}

Since $\|U_h(\ell)\|_2\leq\|U_h(\ell)\|_F$, the cumulative Frobenius-norm estimate obtained above also controls the spectral norms occurring in the convolution estimate. Solving \eqref{eq:error-equation-app} gives $e_h=U_h*\tau_h$. Hence, using \eqref{eq:local-error-app},

\begin{equation*}
\begin{aligned}
\|e_h(m)\|_F
&\leq
\sum_{\ell=0}^{m}\|U_h(\ell)\|_2
\|\tau_h(m-\ell)\|_F\\
&\leq
\frac{s^2e^{\lambda T}}{1-\varrho}
\omega_K(h;T).
\end{aligned}
\end{equation*}

This proves \eqref{eq:mre-global}, with $C_T=s^2e^{\lambda T}/(1-\varrho)$. The Lipschitz statement follows from $\omega_K(h;T)\leq L_K(T)h$.

Under the additional assumptions in the final part of the theorem, Corollary~\ref{cor:stieltjes-second-order}, applied with $A=Q$ and $B=K$, gives a constant $C_{Q,K,T}$ independent of $h$ such that

\begin{equation*}
\max_{0\leq m\leq N_h}\|\tau_h(m)\|_F
\leq
C_{Q,K,T}h^2.
\end{equation*}

The same resolvent estimate then gives

\begin{equation*}
\max_{0\leq m\leq N_h}\|e_h(m)\|_F
\leq
\frac{s e^{\lambda T}}{1-\varrho}C_{Q,K,T}h^2,
\end{equation*}

which proves \eqref{eq:mre-order-two}.
\end{proof}

\section{Algorithmic details}\label{app:algorithms}

\paragraph{FFT convolution.}
Let $A$ and $B$ have lengths $N_A$ and $N_B$. Choose the smallest power of two $L\geq N_A+N_B-1$ and append zero coefficients up to length $L$. Compute the $s^2$ scalar transforms of each sequence, form

\begin{equation*}
\widehat C(m)=\widehat A(m)\widehat B(m),
\qquad m\in I_L,
\end{equation*}

and apply the inverse transforms entrywise. The first $N_A+N_B-1$ coefficients give the linear convolution. Rectangular coefficient matrices are treated in the same way, provided their dimensions are compatible.

\paragraph{Newton inversion.}
Start from the sequence $B_0$ concentrated at zero with $B_0(0)=A(0)^{-1}$. If $B_r$ is correct modulo $x^m$, pad both $A$ and $B_r$ to length $2m$ and compute

\begin{equation*}
B_{r+1}=B_r*(2\ez-A*B_r)\pmod{x^{2m}}.
\end{equation*}

The residual satisfies

\begin{equation*}
\ez-A*B_{r+1}=(\ez-A*B_r)^{(2)}\pmod{x^{2m}},
\end{equation*}

so the number of correct coefficients doubles. The last step is truncated if $N$ is not a power of two.

\paragraph{Gauss--Jordan inversion.}
Form the augmented matrix

\begin{equation*}
\left[P_A(x)\mid I_s\right]
\end{equation*}

over $\C[[x]]/\langle x^N\rangle$. At column $i$, a row permutation selects a pivot whose constant coefficient is nonzero. Such a pivot exists because the constant matrix $A(0)$ is nonsingular. The pivot series is inverted by scalar Newton iteration, the pivot row is normalized, and its remaining column entries are eliminated by truncated scalar-series convolutions. After $s$ steps, the right block is $P_A(x)^{-1}$ modulo $x^N$.

We finally prove Proposition~\ref{prop:inverse-complexity}.

\begin{proof}
The coefficient recursion computes, at order $k$, a sum of $k$ matrix products. Summation over $k=1,\ldots,N-1$ gives

\begin{equation*}
\sum_{k=1}^{N-1}k=\frac{N(N-1)}{2}
\end{equation*}

products of $s\times s$ matrices and hence $\mathcal O(s^3N^2)$ arithmetic operations.

For Newton inversion, a product of two length-$m$ matrix series uses $2s^2$ scalar FFTs, $s^2$ inverse FFTs, and $m$ products of $s\times s$ matrices in the frequency domain. Its cost is therefore

\begin{equation*}
\mathcal O(s^2m\log m+s^3m).
\end{equation*}

A Newton step uses a fixed number of such products. Since the truncation length doubles, the sum over $m=1,2,4,\ldots,N$ is bounded by a constant multiple of the final-step cost and gives

\begin{equation*}
\mathcal O(s^2N\log N+s^3N).
\end{equation*}

At each of the $s$ Gauss--Jordan pivots, normalization of the pivot row uses $2s$ scalar series products and elimination from the other rows uses at most $2s(s-1)$ scalar series products. Thus the elimination uses $\mathcal O(s^3)$ scalar series products in total, together with $s$ scalar series inversions. FFT multiplication and Newton inversion of a scalar series both have cost $\mathcal O(N\log N)$, giving $\mathcal O(s^3N\log N)$. In every method, the coefficient arrays require $\mathcal O(s^2N)$ storage.
\end{proof}

\section{Distributions and parameters}\label{app:parameters}

Two discrete sojourn laws are used. If $X$ has a Gamma distribution with shape $\alpha$ and scale $\theta$, the shifted discrete Gamma mass is

\begin{equation*}
f_{\alpha,\theta}(k)
=
\mathbb P(k-1\leq X<k),
\qquad k\geq1.
\end{equation*}

The shifted Poisson mass is

\begin{equation*}
f_{\lambda}(k)
=
\exp(-\lambda)\frac{\lambda^{k-1}}{(k-1)!},
\qquad k\geq1.
\end{equation*}

Both distributions assign zero probability to time zero. The parameters are given in Table~\ref{tab:discrete-parameters}.

\begin{table}[H]
\centering
\small
\begin{tabular}{lcccc}
\toprule
Experiment & $F_{12}$ & $F_{21}$ & $F_{23}$ & $F_{31}$\\
\midrule
Horizon scaling: $(\alpha,\theta)$ & $(1.8,4)$ & $(1.6,5)$ & $(2.2,4)$ & $(1.9,3)$\\
First entrance: $\lambda$ & $8$ & $5$ & $10$ & $7$\\
\bottomrule
\end{tabular}
\caption{Sojourn-time parameters for the admissible transitions of the three-state model.}
\label{tab:discrete-parameters}
\end{table}

For the Lognormal continuous-time model, the parameters are given in Table~\ref{tab:lognormal-parameters}. Its CDF is $\Phi((\log t-\mu)/\sigma)$ for $t>0$, where $\Phi$ denotes the standard normal distribution function.

\begin{table}[H]
\centering
\small
\begin{tabular}{lccccccc}
\toprule
 & $F_{01}$ & $F_{10}$ & $F_{12}$ & $F_{20}$ & $F_{21}$ & $F_{23}$ & $F_{32}$\\
\midrule
$\mu$ & 3.94 & 3.46 & 2.12 & 1.92 & 1.61 & 2.22 & 0.94\\
$\sigma$ & 1.27 & 1.13 & 0.51 & 0.83 & 0.62 & 0.92 & 0.83\\
\bottomrule
\end{tabular}
\caption{Lognormal sojourn-time parameters for the continuous-time model.}
\label{tab:lognormal-parameters}
\end{table}
\bibliographystyle{abbrvnat}
\bibliography{references}
\end{document}